\documentclass[12pt,a4paper]{article}
\usepackage[utf8]{inputenc}
\parindent 0pt
\usepackage{amsmath,amssymb,amsfonts}
\usepackage{amsthm}
\usepackage{paralist}
\usepackage{mathtools}
\usepackage{color}
\usepackage{multicol}
\usepackage{authblk}
\usepackage{subcaption} 
\usepackage[font=scriptsize,labelfont=bf]{caption}
\usepackage{dsfont}
\usepackage{float}
\usepackage{natbib}

\newtheoremstyle{style}
  {0.5cm}                 
  {0.5cm}                 
  {\itshape}                         
  {}                         
  {\normalfont\bfseries}  
  {\normalfont {\bf : }} 
  {\newline}
  {}
  
\theoremstyle{style}

\newtheorem{theorem}{Theorem}[section]
\newtheorem{definition}{Definition}[section]
\newtheorem{lemma}{Lemma}[section]

\newtheorem{ass}{Assumption}[section]
\newtheorem{remark}{Remark}[section]

\newcommand{\R}{\mathbb{R}}
\newcommand{\N}{\mathbb{N}}

\bibliographystyle{apalike} 

\begin{document}

\title{A Nonlocal Spatial Ramsey Model with Endogenous Productivity Growth on Unbounded Spatial Domains}
\author[*]{L. Frerick}
\author[**]{G. M{\"u}ller-F{\"u}rstenberger}
\author[*]{E. W. Sachs}
\author[*]{L. Somorowsky}

\affil[*]{Department of Mathematics, University of Trier, Germany}
\affil[**]{Department of Economics, University of Trier, Germany}

\maketitle

\section{Introduction}

%
%
%
%

In the recent literature dealing with spatial extensions of the continuous Ramsey model (cf. \citealp{brito01,brito04,brito12}, \citealp{boucekkine,boucekkine13}, or  \citealp{camacho}), the capital accumulation process via time and space is  modeled as a linear parabolic partial differential equation. The process of capital movement through space is described by a Laplace operator. This operator makes use of information on the capital distribution in the respective point and in an arbitrary small neighborhood. In that way, it describes spreading phenomena, where molecules always need physical contact to the direct surrounding to move from one location to another. Considering labor force, the agents endowed with labor may move from one location to another, without undertaking work in every single location they pass through on their way. The same behavior is observable for the dispersion of capital. Investments arise only on some separate locations, they do of course affect the surrounding, but do not spread evenly from one point to another. Capital, as well as labor force, can literally `jump' through space. Hence, a local diffusion operator may not capture the true behavior of the natural dispersion effect of the production factors in an economy.\\

An operator which is more appropriate in this setting is a {\em nonlocal diffusion operator}. The nonlocal diffusion equation, we will consider here, arises naturally from a probabilistic process in which capital moves randomly in space, subject to a probability that allows long jumps. However, the model is not stochastic, but deterministic as we consider an economy with a central planner, who observes any spatial consumption distribution in all points of time, and can determine the capital distribution according to the capital accumulation process.\\
Moreover whereas the Laplace operator insinuates an infinite adjustment speed of the molecules, the nonlocal diffusion operator decelerates this process, which fits real world observations better. In that way, we are able to conserve heterogeneities over a much longer time horizon and to even preserve discontinuities in initial capital or productivity distributions.\\
To our knowledge, we are the first who introduce such {\em nonlocal diffusion effects} in the spatial Ramsey model. In our version of this economic growth model, capital mobility in a location does not only depend on the respective one but also on `far away' locations. \\

A groundbreaking innovation of the Ramsey model is the endogenous saving rate, which means that the optimal saving rate, that maximizes the welfare of the economy, is determined via the households' lifetime maximization intention during the optimization process within the model itself. In that point, the Ramsey model differs from many other neoclassical growth models. Economic growth is also driven by technological progress, or the increase of productivity, which can both be modeled by so called {\em spillover effects}. In the common (local) Ramsey model, this productivity growth is assumed to be growing at a constant rate $A$ (cf. \citealp{brito01,brito04,brito12}, \cite{boucekkine13}). In our opinion, this exogenously pre-defined productivity growth rate sets the endogenous character of the Ramsey model aside. We introduce a new, nonlocal productivity operator $P$, that aims to endogenize the process of productivity growth, and in that way, preserves the self-contained character of the Ramsey model. We assume that there is a correlation between the development (meaning an increase) of productivity and the state of the system, namely the capital stock in a surrounding of a respective location. Moreover, we assume that productivity naturally increases over time. We model the productivity growth as integral term as well. \\


\section{The Model}

The main aspect in the Ramsey model is the (competitive) equilibrium growth. 

\begin{definition}[Competitive Equilibrium]
A {\rm competitive equilibrium} consists of paths of consumption, capital stock, wage rates and rental rates of capital, $\{C_t,K_{t+1},\omega_t,R_t\}_{t=0}^T$, such that the representative household maximizes its utility given an initial capital stock $K_0$ and prices $\{\omega_t,R_t\}$ and the path of prices is such that, given the path of capital stock and labor $\{K_t,L_t\}_{t=1}^{T+1}$, all markets clear.
\end{definition}

In the originally space independent model, \cite{ramsey} himself considered an infinite time horizon. This assumption is appropriate from an economic point of view. Although no agent lives forever, this non-terminated time naturally introduces a  sustainability condition. In some discrete models, as for example introduced by \citet[Chapter~6]{acemoglu}, an immortal agent is  explicitly interpreted as a dynasty, where single individuals have the incentive to pass a non-zero capital stock to future generations. Whenever a space dimension is introduced to the Ramsey model, it is necessary to decide whether the spatial domain should be bounded or not. The combination of an infinite time horizon and an unbounded spatial domain holds some difficulties concerning the well-posedness of the spatial model (cf. \citealp[p.3]{boucekkine}). As already pointed out in Chapter \ref{NSRMWEPG}, we circumvent these difficulties by introducing a terminal capital distribution $k_T$ that should not be  undercut. In this way, we mimic an infinite time horizon, but do only have to deal with a finite terminal time. Moreover, we introduce a spatial discounting in the objective function, which is convenient in the setting of a central planner. 
These additional constraints on the state variable and the special structure of the objective function allow us to consider an unbounded spatial domain in the nonlocal spatial Ramsey model. Such infinite space domains are of interest because they can be interpreted as one single and closed economy, where no flows of production factors to, or interactions with any other economies take place. Moreover, due to the spatial discounting, we do not need to define any boundary conditions in order to guarantee well-posedness of the model. Several types of boundary conditions, such as Neumann, Dirichlet, or Cauchy conditions and their economic meaning are for example discussed by \citet[p.14]{brito04}. Here,  it becomes obvious that the choice of the appropriate type of boundary conditions is not an easy task and that it heavily influences the solution of the underlying partial differential equation. \cite{camacho} also consider a finite time horizon and unbounded spatial domain, but they disclaim any spatial discounting. This is the reason why they have to introduce free boundary conditions that enforce the capital distribution to become flat towards infinity, what restricts the set of possible solutions of the partial differential equation too much.\\

The capital accumulation equation, which we consider in the following, is a mixed local-nonlocal diffusion equation, i.e. the diffusion weights $\alpha,\beta$ that we introduce are both positive. We see later that we can indeed choose $\alpha$, which is the weight of the local diffusion term, very small but that we cannot neglect it. Due to the unbounded spatial domain, we do not have to introduce any boundary or volume constraints. Moreover, we do not have to truncate the kernel function in the nonlocal diffusion operator in this setting, but are able to analyze the dynamics of the Ramsey model on the whole, unbounded, and untruncated spatial domain. We fix the finite time horizon $T\in\N$. The spatial domain we consider here is the untruncated $\R^n$, $n\in\N$. Hence, the nonlocal version of the capital accumulation equation of the Ramsey model which we consider here is defined as

\begin{equation}\label{semilineq}
\begin{split}
k_t  -\mathcal{L}(k)+\delta k - \mathcal{P}(k) &= - c\ \mbox{\hspace{2.1cm} on } \R^n \times (0,T),\\[2mm]
k(\cdot,0)&=k_0(\cdot)>0\ \hspace{1.1cm} \mbox{ in }\R^n,
\end{split}
\end{equation}
where the local-nonlocal diffusion operator $\mathcal{L}$ is defined as

\begin{equation}\label{L}
\mathcal{L}(k)(x,t):=\alpha\ \Delta k(x,t)+ \beta\int_{\R^n} (k(y,t)-k(x,t))\Gamma_\varepsilon(x,y)dy,
\end{equation}

for coefficients $\alpha,\beta > 0$ and $\varepsilon>0$. From an application point of view, it is useful to consider the density function of the multivariate normal distribution as kernel function, hence
\begin{equation}
\Gamma_{\varepsilon}(x,y):=\frac{1}{\sqrt{(2\pi\varepsilon^2)^n}}\exp\left(-\frac{1}{2}(x-y)^T\Sigma_\varepsilon^{-1}(x-y)\right),
\end{equation}
for a given covariance matrix $\Sigma_\varepsilon$ with $\det(\Sigma_\varepsilon)=\varepsilon^{2n}$, $\varepsilon>0$. In the following, we assume that the matrix $\Sigma_\varepsilon$ is a diagonal matrix with constant entries,
\begin{equation*}
\Sigma_\varepsilon=\begin{bmatrix}
\varepsilon^2 & &\\
& \ddots & \\
& & \varepsilon^2
\end{bmatrix}\in\R^{n\times n}.
\end{equation*}
This assumption is application driven. We assume that capital can move through space without any barriers, or transition costs, thus capital flows are absolutely free in space. Moreover, the central planner does not prioritize any space direction, but weights them all equally. Hence, the spatial directions in the spatialized Ramsey model are completely uncorrelated and the variances are equal.\\
Given this special form of the covariance matrices, we can rewrite the kernel function as
\begin{equation}\label{gamma}
\Gamma_{\varepsilon}(x,y)=\frac{1}{\sqrt{(2\pi\varepsilon^2)^n}}\exp\left(-\frac{\|x-y\|_2^2}{2\varepsilon^2}\right), 
\end{equation}
where $\|\cdot\|_2$ denotes the Euclidean norm.\\

The nonlocal operator $\mathcal{P}$ on the left-hand side describes the production of the economy and is given as
\begin{equation}\label{P}
\begin{split}
\mathcal{P}(k)(x,t):&=P(k)(x,t) \ p(k(x,t))   \qquad  \mbox{where} \\[3mm]
P(k)(x,t) &= A_0(x) \exp\left(\frac{\int_{\R^n}\phi(k(y,t))\Gamma_{\mu}(x,y)dy}{\int_{\R^n}\phi(k(y,t))\Gamma_{\varepsilon}(x,y) dy+\xi}\ t\right) \ ,
\end{split}
\end{equation}
and $A_0:\R^n\to\R$ denotes the initial productivity distribution over space, $\phi:\R\to\R_+$ is the continuous nominal function, and $p:\R\to\R$ denotes the productivity function. The kernel function $\Gamma_\mu$ is defined analogously to (\ref{gamma}) for a parameter $0<\mu<\varepsilon$. The boundedness of the fraction in the exponential function is an important property, that we will exploit very often in this chapter. We state this property in the next lemma.

\begin{lemma}
Let $\phi:\R\to\R_+$ be continuous with $\phi(x) \leq a_0 + a_1 x$ for all $x \in \R$ and some $a_0, a_1 \in \R$. Let $\xi>0$ and the kernel functions $\Gamma_\mu$ and $\Gamma_\varepsilon$ for parameters $0<\mu \leq \varepsilon$ be defined according to equation (\ref{gamma}). Then the estimate
\begin{equation}\label{estimateGamma}
\frac{\int_{\R^n}\phi(k(y,t))\Gamma_{\mu}(x,y)dy}{\int_{\R^n}\phi(k(y,t))\Gamma_{\varepsilon}(x,y) dy+\xi} \le \left(\frac{\varepsilon}{\mu}\right)^n
\end{equation}
holds for all $x\in \R^n$.
\end{lemma}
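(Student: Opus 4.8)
The plan is to reduce the claimed bound to an elementary pointwise comparison of the two Gaussian kernels. First I would observe that, because $0<\mu\le\varepsilon$, for all $x,y\in\R^n$
\begin{equation*}
\frac{\Gamma_\mu(x,y)}{\Gamma_\varepsilon(x,y)}=\left(\frac{\varepsilon}{\mu}\right)^n\exp\left(-\frac{\|x-y\|_2^2}{2}\left(\frac{1}{\mu^2}-\frac{1}{\varepsilon^2}\right)\right)\le\left(\frac{\varepsilon}{\mu}\right)^n,
\end{equation*}
since $\tfrac{1}{\mu^2}-\tfrac{1}{\varepsilon^2}\ge0$ makes the exponent nonpositive. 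Hence $\Gamma_\mu(x,y)\le(\varepsilon/\mu)^n\,\Gamma_\varepsilon(x,y)$ uniformly in $x,y$.

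Next, since $\phi$ takes values in $\R_+$, the integrand $\phi(k(y,t))$ is nonnegative, so multiplying the kernel inequality by $\phi(k(y,t))$ and integrating over $\R^n$ yields
\begin{equation*}
\int_{\R^n}\phi(k(y,t))\Gamma_{\mu}(x,y)\,dy\le\left(\frac{\varepsilon}{\mu}\right)^n\int_{\R^n}\phi(k(y,t))\Gamma_{\varepsilon}(x,y)\,dy
\end{equation*}
for every $x\in\R^n$. The linear growth hypothesis $\phi(x)\le a_0+a_1x$ enters only at this point: together with the Gaussian decay of $\Gamma_\varepsilon$ (and the growth bound on $k(\cdot,t)$ in force) it guarantees that both integrals, and in particular the denominator in $(\ref{estimateGamma})$, are finite, so that the quotient is well defined.

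Finally I would divide by the strictly positive number $s+\xi$, where $s:=\int_{\R^n}\phi(k(y,t))\Gamma_{\varepsilon}(x,y)\,dy\ge0$, and use that $s/(s+\xi)\le1$ for all $s\ge0$ and $\xi>0$. This gives
\begin{equation*}
\frac{\int_{\R^n}\phi(k(y,t))\Gamma_{\mu}(x,y)\,dy}{\int_{\R^n}\phi(k(y,t))\Gamma_{\varepsilon}(x,y)\,dy+\xi}\le\left(\frac{\varepsilon}{\mu}\right)^n\cdot\frac{s}{s+\xi}\le\left(\frac{\varepsilon}{\mu}\right)^n,
\end{equation*}
which is exactly $(\ref{estimateGamma})$. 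There is no genuinely hard step here: the inequality is driven entirely by the kernel ratio computation, which crucially exploits $\mu\le\varepsilon$, and the only subtlety worth stating explicitly is the well-definedness of the ratio — precisely what the assumptions $\xi>0$ and the at-most-linear growth of $\phi$ are there to secure.
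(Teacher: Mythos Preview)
Your proof is correct and follows essentially the same approach as the paper: both arguments hinge on the pointwise kernel comparison $\Gamma_\mu(x,y)\le(\varepsilon/\mu)^n\Gamma_\varepsilon(x,y)$ (you phrase it as a ratio, the paper as a difference), combined with the nonnegativity of $\phi$ and the positivity of $\xi$. Your final step, factoring out $s/(s+\xi)\le1$, is a slightly cleaner packaging of what the paper does by absorbing the extra $(\varepsilon/\mu)^n\xi$ term on the right-hand side.
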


\begin{proof}
Without any loss of generality, we choose $x=0$. As $x$ is by definition the expected value of $\Gamma_\nu$, $\nu\in\{\mu,\varepsilon\}$, the proof will be analog for every other $x$, but with translational displaced $\Gamma_\nu$. We define $\Gamma_\nu(0,y)=:\Gamma_\nu(y)$. The inequality (\ref{estimateGamma}) can be rewritten as
\begin{equation*}
\begin{split}
\int_{\R}\phi(k(y,t))\Gamma_{\mu}(y)dy \le \left(\frac{\varepsilon}{\mu}\right)^n\left( \int_{\R}\phi(k(y,t))\Gamma_{\varepsilon}(y) dy + \xi\right).
\end{split}
\end{equation*}
Subtracting the left term, we get
\begin{equation*}
\begin{split}
0 \le \int_{\R}\phi(k(y,t))\left( \left(\frac{\varepsilon}{\mu}\right)^n\Gamma_{\varepsilon}(y)- \Gamma_\mu(y)\right) dy + \left(\frac{\varepsilon}{\mu}\right)^n\xi,
\end{split}
\end{equation*}
which is in particular true whenever 
\[\left(\frac{\varepsilon}{\mu} \right)^n\Gamma_{\varepsilon}(y)- \Gamma_\mu(y) \ge 0 \]
for all $y\in\Omega$, since we assume $\phi$ to be nonnegative. But this inequality follows with the monotonicity of the exponential function. Let therefore $y\in\Omega$ be arbitrary, then we have
\begin{equation*}
\left(\frac{\varepsilon}{\mu}\right)^n \Gamma_{\varepsilon}(y)- \Gamma_\mu(y)
  = \frac{1}{\sqrt{(2\pi\mu^2)^n}}\left( \exp\left( -\frac{\|y\|_2^2}{2\varepsilon^2}  \right) - \exp\left( -\frac{\|y\|_2^2}{2\mu^2}  \right)\right)\ge 0
\end{equation*}
whenever $\mu \leq \varepsilon$,
which completes the proof.
\end{proof}

\section{Existence of Solutions of the Capital Accumulation Equation}
The nonlocal capital accumulation equation we defined earlier is a semilinear parabolic partial integro-differential equation. The local diffusion operator allows us to use the common methodology to analyze the PIDE with respect to well posedness in the weak sense. In order to do so, we derive the weak formulation of (\ref{semilineq}) multiplying the equation with a function $v\in H^1(\R^n)$ and integrating over $\R^n$. Integrating by parts, we get

\begin{equation*}
\begin{split}
&\int_{\R^n} k_t(x,\cdot)\ v(x) \ dx\ +\ \int_{\R^n}\ (\alpha\  \nabla_x k(x,\cdot)^T \nabla_x v(x)\   +  \delta  k(x,\cdot)\ v(x)) \ dx \\
&- \beta \int_{\R^n} \int_{\R^n} (k(y,\cdot)-k(x,\cdot))\Gamma_\varepsilon(x,y)dy\ v(x) \  dx\  = \int_{\R^n} \left( \mathcal{P}(k)(x,\cdot)\ -\ c(x)\right) v(x) \ dx ,
\end{split}
\end{equation*}

where the equality has to be understood in distributional sense with respect to $t$.\\

This weak formulation motivates the following definition of a bilinear form.

\begin{definition}
We define the bilinear form  $\ {\bf a}:H^1(\R^n)\times H^1(\R^n)\to\R$ by
\begin{equation}\label{a}
\begin{split}
{\bf a}(u,v):&=\alpha\ \int_{\R^n}\ \nabla_x u^T \nabla_x v\ dx \ + \ \delta \int_{\R^n} u\ v \ dx\\
&- \beta \int_{\R^n} \int_{\R^n} (u(y)-u(x))\Gamma_\varepsilon(x,y)\ dy\ v(x) \ dx.
\end{split}
\end{equation}
\end{definition}

In order to apply a quite abstract result of \citet[p. 513]{dautray}, where the authors prove the existence of a weak solution of a linear PDE, we have to show that the bilinear form ${\bf a}$ is continuous and weakly coercive. Note that this bilinear form is independent of time, since we have chosen $\alpha$ and $\beta$ to be constants. \\ 

\begin{lemma}\label{PropyA}
There exist some constants $c_1,c_3>0$, and $c_2\ge 0$ such that the bilinear form ${\bf a}$ as defined in (\ref{a}) satisfies the following properties for all functions $u,v\in H^1(\R^n)$:
\begin{equation}
\begin{split}
(i)& \mbox{Continuity: }|{\bf a}(u,v)|\le c_1\|u\|_{H^1(\R^n)}\|v\|_{H^1(\R^n)},\\[3mm]
(ii)&\mbox{G\r{a}rding Inequality: } {\bf a}(u,u)+c_2\|u\|_{L^2(\R^n)}^2\ge c_3\|u\|^2_{H^1(\R^n)}.
\end{split}
\end{equation}
\end{lemma}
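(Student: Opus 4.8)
The plan is to verify the two estimates for $\mathbf{a}$ termwise, treating the local part $\alpha\int \nabla u^T\nabla v + \delta \int uv$ by standard Sobolev-space arguments and isolating the nonlocal double integral as the term requiring genuine work. For continuity, I would bound the first two summands immediately by Cauchy--Schwarz: $\alpha\int_{\R^n}\nabla u^T\nabla v\,dx \le \alpha\|u\|_{H^1}\|v\|_{H^1}$ and $\delta\int_{\R^n}uv\,dx\le\delta\|u\|_{L^2}\|v\|_{L^2}\le\delta\|u\|_{H^1}\|v\|_{H^1}$. For the nonlocal term, I would split $\int\!\!\int (u(y)-u(x))\Gamma_\varepsilon(x,y)\,dy\,v(x)\,dx$ into two pieces. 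The piece $\int_{\R^n}\!\big(\int_{\R^n}u(x)\Gamma_\varepsilon(x,y)\,dy\big)v(x)\,dx = \int_{\R^n}u(x)v(x)\,dx$, using that $\int_{\R^n}\Gamma_\varepsilon(x,y)\,dy=1$ since $\Gamma_\varepsilon(x,\cdot)$ is a probability density; this is again bounded by $\|u\|_{L^2}\|v\|_{L^2}$. The remaining piece is $\int_{\R^n}\!\!\int_{\R^n} u(y)\Gamma_\varepsilon(x,y)\,dy\,v(x)\,dx$, which is a convolution-type operator; writing $\Gamma_\varepsilon(x,y)=g_\varepsilon(x-y)$ with $g_\varepsilon$ the centered Gaussian density, this is $\int_{\R^n}(g_\varepsilon * u)(x)\,v(x)\,dx$, and by Young's inequality $\|g_\varepsilon*u\|_{L^2}\le\|g_\varepsilon\|_{L^1}\|u\|_{L^2}=\|u\|_{L^2}$, so Cauchy--Schwarz again gives the bound $\|u\|_{L^2}\|v\|_{L^2}\le\|u\|_{H^1}\|v\|_{H^1}$. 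Collecting, $|\mathbf{a}(u,v)|\le(\alpha+\delta+2\beta)\|u\|_{H^1}\|v\|_{H^1}$, so $c_1:=\alpha+\delta+2\beta$ works.

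For the G\r{a}rding inequality I would compute $\mathbf{a}(u,u)=\alpha\|\nabla u\|_{L^2}^2+\delta\|u\|_{L^2}^2-\beta\big(\int\!\!\int u(y)u(x)\Gamma_\varepsilon(x,y)\,dy\,dx-\|u\|_{L^2}^2\big)$. The crucial observation is that the nonlocal quadratic form is sign-definite: using the symmetry $\Gamma_\varepsilon(x,y)=\Gamma_\varepsilon(y,x)$ and $\int\Gamma_\varepsilon(x,y)\,dy=1$, one has the classical identity
\begin{equation*}
\int_{\R^n}\!\!\int_{\R^n}(u(y)-u(x))\Gamma_\varepsilon(x,y)\,dy\,u(x)\,dx = -\tfrac12\int_{\R^n}\!\!\int_{\R^n}(u(y)-u(x))^2\Gamma_\varepsilon(x,y)\,dy\,dx \le 0,
\end{equation*}
so the $-\beta(\cdots)$ contribution to $\mathbf{a}(u,u)$ is nonnegative and can simply be dropped. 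Alternatively, without invoking that identity, the convolution piece is bounded by $\int u(y)u(x)\Gamma_\varepsilon(x,y)\,dy\,dx = \langle g_\varepsilon*u,u\rangle\le\|g_\varepsilon*u\|_{L^2}\|u\|_{L^2}\le\|u\|_{L^2}^2$, hence $-\beta(\langle g_\varepsilon*u,u\rangle - \|u\|_{L^2}^2)\ge 0$ again. Either way, $\mathbf{a}(u,u)\ge\alpha\|\nabla u\|_{L^2}^2+\delta\|u\|_{L^2}^2$. Choosing $c_3:=\min\{\alpha,\delta\}$ and $c_2:=0$ gives $\mathbf{a}(u,u)\ge c_3(\|\nabla u\|_{L^2}^2+\|u\|_{L^2}^2)=c_3\|u\|_{H^1}^2$ directly, so in fact the bilinear form is coercive, not merely weakly coercive, precisely because $\delta>0$ supplies the $L^2$-control that the nonlocal term cannot destroy.

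The main obstacle, and the step deserving the most care, is controlling the nonlocal double integral in both estimates: one must justify the Fubini interchanges on the unbounded domain $\R^n$ (legitimate because $\Gamma_\varepsilon\ge 0$ and, for $u,v\in L^2$, the integrand is absolutely integrable by the Young/Cauchy--Schwarz bounds just described) and, for the G\r{a}rding step, recognize the sign of the nonlocal quadratic form via the symmetrization identity above. I would also remark that positivity of $\delta$ is what removes any need for a nonzero $c_2$ here — if one instead wanted to keep the statement's flexibility with $c_2\ge 0$, taking $c_2=0$ is admissible — and that the smallness of $\alpha$ alluded to in the text does not hurt coercivity, since the $\delta\|u\|_{L^2}^2$ term alone, together with the nonnegative gradient term, suffices; however the gradient term with weight $\alpha>0$ is exactly what will later be needed to place the weak solution in $H^1$ rather than merely $L^2$, which is why $\alpha$ cannot be dropped entirely.
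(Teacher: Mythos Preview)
Your argument is correct, and for part (ii) it actually yields a strictly stronger conclusion than the paper does. The two proofs diverge in how they handle the nonlocal double integral.

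For (i), the paper does not split the nonlocal term as you do; instead it substitutes $y=x-z$, writes $u(x-z)-u(x)=\int_0^1 \nabla u(x-\xi z)^T z\,d\xi$ via the fundamental theorem of calculus, and arrives at a bound $\beta\kappa\,\|\nabla u\|_{L^2}\|v\|_{L^2}$ with $\kappa=\int_{\R^n}|z|\,\Gamma_\varepsilon(z)\,dz$. This cross term (gradient of $u$ against $L^2$ of $v$) is then recycled in (ii), where Young's inequality with a parameter $c>0$ converts $-\beta\kappa\|\nabla u\|_{L^2}\|u\|_{L^2}$ into $-\tfrac{c}{2}\|\nabla u\|_{L^2}^2-\tfrac{(\beta\kappa)^2}{2c}\|u\|_{L^2}^2$; one needs $c<2\alpha$ to keep a positive gradient coefficient, and a nonzero $c_2$ absorbs the remaining $L^2$ loss. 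The paper's Remark following the lemma emphasizes that this is where $\alpha>0$ is indispensable.

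Your route avoids that detour entirely: by splitting the nonlocal term into a convolution piece and a diagonal piece (or, equivalently, by the symmetrization identity), you see directly that $-\beta\int\!\!\int(u(y)-u(x))\Gamma_\varepsilon(x,y)u(x)\,dy\,dx\ge 0$, so no gradient loss ever appears and you get genuine coercivity with $c_2=0$, $c_3=\min\{\alpha,\delta\}$. This is cleaner and sharper. What the paper's approach buys in exchange is the explicit scale $\kappa\sim\varepsilon$, making transparent that the nonlocal operator is a first-order (in $\varepsilon$) perturbation of the local one; your estimate, with constant $2\beta$ independent of $\varepsilon$, does not display that scaling, but it is not needed for the lemma as stated. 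Your closing remark about $\alpha>0$ is also apt: in your proof $\alpha>0$ is required only to make $c_3>0$ and hence land in $H^1$, not as a technical artifact of a Young-inequality step.
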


\begin{proof}\ \\
$(i)$ For the first and second term of the bilinear form defined in (\ref{a}), we have 
\begin{equation*}
\begin{split}
\left|\int_{\R^n} \left( \alpha \ \nabla_x u^T \nabla_x v +\delta\ uv\right) \ dx\right| & \le (\alpha+\delta)\|u\|_{H^1(\R^n)}\|v\|_{H^1(\R^n)}\qquad \forall\  u,v \in H^1(\R^n),
\end{split}
\end{equation*}
using the H\"older inequality two times and the definition of the $H^1(\R^n)$ norm.\\
In order to estimate the nonlocal term, a little more work has to be done. We rewrite the term for $y:=x-z$ and apply the fundamental theorem of calculus. This yields 

\begin{equation*}
\begin{split}
& \left|\int_{\R^n}\int_{\R^n} (u(x-z)-u(x))\frac{1}{\sqrt{(2\pi\varepsilon^2)^n}}\exp\left(-\frac{\|z\|_2^2}{2\varepsilon^2}\right)dz\ v(x)\  dx \right|\\
=&  \left|\int_{\R^n}\int_{\R^n} \int_0^1 \nabla_x u(x-\xi z)^T z d\xi\ \frac{1}{\sqrt{(2\pi\varepsilon^2)^n}}\exp\left(-\frac{\|z\|_2^2}{2\varepsilon^2}\right)dz\ v(x)\  dx\right|\\
\le &   \int_{\R^n} \|\nabla_x u\|_{L^2(\R^n)} \|v\|_{L^2(\R^n)}\  |z| \ \frac{1}{\sqrt{(2\pi\varepsilon^2)^n}}\exp\left(-\frac{\|z\|_2^2}{2\varepsilon^2}\right)dz \\[1mm]
\le &  \kappa\|\nabla_x u\|_{L^2(\R^n)}\|v\|_{L^2(\R^n)} \leq \kappa\|u\|_{H^1(\R^n )}\|v\|_{H^1(\R^n)},
\end{split}
\end{equation*}
with a constant $\kappa$,
which is finite, since $\int_{\R^n} |x|\exp(-a\|x\|_2^2)\ dx$ is bounded whenever $a$ is positive.\\
Combining all estimates, the continuity of ${\bf a}$
with $c_1:= \alpha +\delta+\beta\kappa$, is proven.
\ \\

$(ii)$ To prove the weak coercivity of ${\bf a} $, the procedure is the same as in $(i)$, hence every term is estimated separately. For the first term, we have
\[ \int_{\R^n} (\alpha\ |\nabla_x u|^2 + \delta\ u^2) \ dx= \alpha \|\nabla_x u\|_{L^2(\R^n )}^2 + \delta \|u\|_{L^2(\R^n )}^2 = \alpha\|u\|_{H^1(\R^n )}^2 + (\delta-\alpha) \|u\|^2_{L^2(\R^n ).}\]

The estimate from $(i)$ leads to the following:

\begin{equation*}
\begin{split}
& - \beta \int_{\R^n}\int_{\R^n} (u(x-z)-u(x))\Gamma_\varepsilon(z) dz\ u(x)\ dx
\ge  - \beta \kappa \|\nabla_x u\|_{L^2(\R^n )} \|u\|_{L^2(\R^n )}.
\end{split}
\end{equation*}
Using Young's inequality for an arbitrary $c > 0$, we get
\begin{equation*}
\begin{split}
- \beta\kappa \|\nabla_x u\|_{L^2(\R^n )} \|u\|_{L^2(\R^n )}  \ge - \frac{c}{2}\|\nabla_x u\|_{L^2(\R^n )}^2 - \frac{(\beta\kappa) ^2}{2c}\|u\|_{L^2(\R^n)}^2.
\end{split}
\end{equation*}

Combining both estimates then completes the proof,

\[{\bf a}  (u,u)\ge \alpha \|u\|^2_{H^1(\R^n )}-\left(\alpha+\frac{(\beta\kappa) ^2}{2c} -\delta\right)\|u\|^2_{L^2(\R^n )} - \frac{c}{2}\|\nabla_x u\|_{L^2(\R^n )}^2\]
which yields (ii) by choosing $c$ sufficiently small.
\end{proof}

\begin{remark}

Note that at the end of the proof, we need the parameter $\alpha$, which is the weighting parameter of the local diffusion operator, to be positive such that $\alpha-\frac{c}{2}$ is positive. The constant $c$, which comes from Young's inequality, is positive, so we cannot choose $\alpha=0=c$. Hence, at this point it becomes obvious why we need the local diffusion term in the Ramsey model on unbounded spatial domains. Nevertheless, we can choose $c$ to be very small and so are able to minimize the local diffusion effect in the spatial Ramsey model over unbounded spatial domains.
\end{remark}

In order to achieve the result on the existence of weak solutions of linear PDEs by \cite{dautray} also for  the semilinear case we are studying here, we need to state some assumptions on the nonlinearities in our model. 

\begin{ass}\label{AssP}
a) The nonlinear functions in the nonlocal spatial Ramsey model with endogenous productivity growth are assumed to satisfy the following properties: The production function $p:\R\to\R$
\begin{itemize}
\item  is concave and Lipschitz continuous, hence there exists a constant $L_p>0$, such that
\[|p(x)-p(y)|\le L_p |x-y|,\ \forall \ x, y \in \R. \]
\item  is bounded, hence there exists a constant $M_p>0$, such that
\[|p(x)| \le M_p,\ \forall\ x\in \R.\]
\item  satisfies 
\[p(0)=0.\]
\end{itemize}
b) The initial productivity distribution satisfies $A_0\in L^2(\R^n)\cap L^\infty(\R^n)$.\\
c) The nominal function $\phi:\R\to\R_+$ is Lipschitz continuous with Lipschitz constant $L_\phi>0$, hence
\[|\phi(x)-\phi(y)|\le L_\phi |u-v|,\ \forall\ x, y\in\R\]
and satisfies for some $a_0, a_1$
\[ \phi(x) \leq a_0 + a_1 x \quad \mbox{ for all} ~x \in \R. \]
\end{ass}

The following regularity of the productivity-production operator $\mathcal{P}$ will be crucial in the proof of existence of a weak solution of the capital accumulation equation.

\begin{lemma}\label{regularityP}
Let Assumption \ref{AssP} be valid. The operator $\mathcal{P}$ is bounded and Lipschitz continuous in the following sense:
\begin{equation} \label{Pbdd}
\| \mathcal{P}(k)(\cdot,t) \|_{L^2(\R^n)} \leq c_1 \|k\|_{L^2(\R^n)}  \quad \mbox{for all}~~ t \in [0,T]
\end{equation}
and
\begin{equation} \label{Plip}
\| \mathcal{P}(k_1)(\cdot,t)  - \mathcal{P}(k_2)(\cdot,t) \|_{L^2(\R^n)} \leq c_2 \|k_1 - k_2\|_{L^2(\R^n)} \quad \mbox{for all}~~ t \in [0,T]
\end{equation}
hold for all $k, k_1, k_2 \in L^2(0,T;{L^2(\R^n))} $.
\end{lemma}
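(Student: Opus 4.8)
The plan is to extract from the bound (\ref{estimateGamma}) a uniform control on the exponential factor $P$, and then propagate it through the product structure $\mathcal P(k)=P(k)\,p(k)$. First I would record a well-definedness remark: if $k\in L^2(0,T;L^2(\mathbb R^n))$ then $k(\cdot,t)\in L^2(\mathbb R^n)$ for a.e.\ $t$, and since $\phi$ has at most linear growth (Assumption \ref{AssP} c)) while the Gaussians $\Gamma_\mu(x,\cdot),\Gamma_\varepsilon(x,\cdot)$ lie in $L^1\cap L^2(\mathbb R^n)$, both integrals defining $P(k)(x,t)$ are finite for a.e.\ $t$ and all $x$; hence $\mathcal P(k)(\cdot,t)$ is well defined for a.e.\ $t$, and the estimates below are understood for such $t$. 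Because $\phi\ge0$, $\xi>0$ and $t\in[0,T]$, the exponent of $P$ satisfies $0\le \big(\int\phi(k(y,t))\Gamma_\mu(x,y)\,dy\big)\big/\big(\int\phi(k(y,t))\Gamma_\varepsilon(x,y)\,dy+\xi\big)\cdot t\le (\varepsilon/\mu)^nT$ by (\ref{estimateGamma}), which gives the pointwise bound $|P(k)(x,t)|\le C_P|A_0(x)|$ with $C_P:=e^{(\varepsilon/\mu)^nT}$, uniformly in $k$, $x$ and $t\in[0,T]$.

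For (\ref{Pbdd}): from $p(0)=0$ and Lipschitz continuity of $p$ one has $|p(k(x,t))|\le L_p|k(x,t)|$, so $|\mathcal P(k)(x,t)|=|P(k)(x,t)|\,|p(k(x,t))|\le C_PL_p\|A_0\|_{L^\infty(\mathbb R^n)}\,|k(x,t)|$; squaring and integrating over $x$ yields (\ref{Pbdd}) with $c_1=C_PL_p\|A_0\|_{L^\infty(\mathbb R^n)}$. (Together with $|p|\le M_p$, the hypothesis $A_0\in L^2(\mathbb R^n)$ alternatively gives the uniform bound $\|\mathcal P(k)(\cdot,t)\|_{L^2}\le C_PM_p\|A_0\|_{L^2(\mathbb R^n)}$, which is what ensures $\mathcal P(k)(\cdot,t)\in L^2(\mathbb R^n)$ in the first place.)

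For (\ref{Plip}): use the telescoping identity
\[
\mathcal P(k_1)-\mathcal P(k_2)=P(k_1)\big(p(k_1)-p(k_2)\big)+\big(P(k_1)-P(k_2)\big)p(k_2).
\]
The first summand is estimated as above, giving the $L^2(\mathbb R^n)$-bound $C_PL_p\|A_0\|_{L^\infty}\|k_1(\cdot,t)-k_2(\cdot,t)\|_{L^2(\mathbb R^n)}$. For the second, $|p|\le M_p$ reduces the task to bounding $\|P(k_1)(\cdot,t)-P(k_2)(\cdot,t)\|_{L^2(\mathbb R^n)}$. Writing $F_i(x,t)$ for the fraction in the exponent of $P(k_i)$, I would combine $|e^{a}-e^{b}|\le e^{\max\{a,b\}}|a-b|$ with the uniform bound $0\le F_i(x,t)\,t\le (\varepsilon/\mu)^nT$ to get $|P(k_1)(x,t)-P(k_2)(x,t)|\le C_PT\,|A_0(x)|\,|F_1(x,t)-F_2(x,t)|$. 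Denoting by $N_i,D_i$ the numerator and denominator of $F_i$ (so $D_i\ge\xi$), the identity $F_1-F_2=\big(N_1(D_2-D_1)+D_1(N_1-N_2)\big)/(D_1D_2)$, together with $N_1/D_1\le(\varepsilon/\mu)^n$ (again (\ref{estimateGamma})) and $D_i\ge\xi$, yields $|F_1-F_2|\le\xi^{-1}\big((\varepsilon/\mu)^n|D_1-D_2|+|N_1-N_2|\big)$. By Lipschitz continuity of $\phi$, $|N_1(x,t)-N_2(x,t)|\le L_\phi\big(\Gamma_\mu\ast|k_1(\cdot,t)-k_2(\cdot,t)|\big)(x)$ and similarly for $D_1-D_2$ with $\Gamma_\varepsilon$ (using that the kernels depend on $x-y$ only). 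Since $\|\Gamma_\nu\|_{L^1(\mathbb R^n)}=1$, Young's convolution inequality bounds the $L^2_x$-norms of $N_1-N_2$ and $D_1-D_2$ by $L_\phi\|k_1(\cdot,t)-k_2(\cdot,t)\|_{L^2(\mathbb R^n)}$. Collecting the pieces and pulling $\|A_0\|_{L^\infty}$ out gives $\|P(k_1)(\cdot,t)-P(k_2)(\cdot,t)\|_{L^2(\mathbb R^n)}\le C_PT\|A_0\|_{L^\infty}\xi^{-1}L_\phi\big(1+(\varepsilon/\mu)^n\big)\|k_1(\cdot,t)-k_2(\cdot,t)\|_{L^2(\mathbb R^n)}$, and adding the two summands yields (\ref{Plip}) with an explicit $c_2$.

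I expect the only genuine obstacle to be the nonlocal exponential factor $P$: one has to invoke (\ref{estimateGamma}) twice — once to make the exponent uniformly bounded, so that $\exp(\cdot)$ is Lipschitz on the relevant range, and once to control the quotient $N_1/(D_1D_2)$ — and one has to recognise the differences of numerators and denominators as Gaussian convolutions, so that Young's inequality converts the $L^1$-normalisation of the kernels into an $L^2\to L^2$ bound. The remaining ingredients (the telescoping decomposition, Cauchy--Schwarz, $p(0)=0$, $|p|\le M_p$) are routine.
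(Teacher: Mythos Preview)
Your proof is correct and follows essentially the same route as the paper: the telescoping decomposition of $\mathcal P(k_1)-\mathcal P(k_2)$, the uniform bound on the exponent via (\ref{estimateGamma}) to make $\exp$ locally Lipschitz, and the algebraic estimate of the fraction difference all match. The only minor variation is in the last step, where you invoke Young's convolution inequality together with $\|A_0\|_{L^\infty}$, whereas the paper uses Cauchy--Schwarz on the inner integral together with $\|A_0\|_{L^2}$ and $\sup_x\int\Gamma_\nu^2(x,y)\,dy<\infty$; both are valid under Assumption~\ref{AssP}.
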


\begin{proof}
We show the boundedness of $\mathcal{P}$, estimating the $L^2$-norm of $\mathcal{P}$ as follows: Let $k$ be a function in $L^2(0,T;L^2(\R))$. Exploiting equation (\ref{estimateGamma}) and the assumptions stated above, we get

\begin{equation*}
\begin{split}
&\|\mathcal{P}(k)(\cdot,t)\| ^2_{L^2(\R^n )} =  \int_{\R^n} |\mathcal{P}(k)(x,t)|^2 \ \ dx\\
= & \int_{\R^n} \left| A_0(x)\exp\left( \frac{\int_{\R^n} \phi(k(y,t))\Gamma_{\mu} (x,y)\ dy}{\int_{\R^n} \phi(k(y,t))\Gamma_{\varepsilon} (x,y)\  dy + \xi} \ t\right)\ p(k(x,t)) \right|^2 \ dx\\
\le & \|A_0\|^2_{L^\infty(\R^n)}  \int_{\R^n} \exp\left(2t \frac{\int_{\R^n} \phi(k(y,t))\Gamma_{\mu} (x,y)\  dy}{\int_{\R^n}\phi( k(y,t))\Gamma_{\varepsilon} (x,y)\  dy + \xi} \right)\ p(k(x,t))^2   \ dx\\
\le & \|A_0\|^2_{L^\infty(\R^n)} \exp(\frac{2T\varepsilon^n}{\mu^n}) \int_{\R^n} (p(k(x,t)) - p(0))^2 \ dx\\
\le &\|A_0\|^2_{L^\infty(\R^n)}   \exp(\frac{2T\varepsilon^n}{\mu^n}) L_p^2 \|k\|^2_{L^2(\R^n))} <\infty.
\end{split}
\end{equation*}

In order to prove the Lipschitz continuity of $\mathcal{P}$, we consider two functions $v_1,\ v_2\in L^2(0,T;L^2(\R^n))$ to estimate
\begin{align}
&\|\mathcal{P}(v_1)(\cdot,t) - \mathcal{P}(v_2)(\cdot,t)\|_{L^2(\R^n)} \leq \\[2mm]
& \|P(v_1)(\cdot,t)(p(v_1)(\cdot,t) - p(v_2)(\cdot,t))\|_{L^2(\R^n)} 
 + \|(P(v_1)(\cdot,t) - P(v_2)(\cdot,t))p(v_2)(\cdot,t)\|_{L^2(\R^n)}. \label{star5}
\end{align}
For the first term, we can deduce
\begin{align}
& \|P(v_1)(\cdot,t)(p(v_1)(\cdot,t) - p(v_2)(\cdot,t))\|_{L^2(\R^n)} 
\le & L_p\|P(v_1)(\cdot,t)\|_{L^\infty(\R^n)}\|v_1(\cdot,t)-v_2(\cdot,t)\|_{L^2(\R )}, \label{star6}
\end{align}
exploiting the Lipschitz continuity of the production function $p$ as claimed in Assumption \ref{AssP}. The productivity operator $P$ is bounded in $L^\infty(0,T,L^\infty(\R^n))$, which we get from
\begin{align}
&\|P(v)(\cdot,t)\|_{L^\infty(\R^n)}\\
:&= \mathrm{ess\ sup}_{x\in\R^n} \left| A_0(x) \exp\left(\frac{\int_{\R^n} \phi(v(y,t))\Gamma_\mu(x,y)dy}{\int_{\R^n} \phi(v(y,t))\Gamma_\varepsilon(x,y)dy+\xi}t\right)\right|\\
 &\le \|A_0\|_{L^\infty(\R^n)}\exp({\frac{T\varepsilon^n}{\mu^n}}) < \infty  \label{star7}
\end{align}
again referring to Assumption \ref{AssP}. Finally we have for some constants $d_1, d_2$
\[ \|P(v_1)(\cdot,t)(p(v_1)(\cdot,t) - p(v_2)(\cdot,t))\|_{L^2(\R^n)}  \leq d_1 \exp(d_2 T)  \|v_1(\cdot,t)-v_2(\cdot,t)\|_{L^2(\R )} \]
For the estimation of the second term, we use the global boundedness of the production function $p$ and find
\begin{align*}
\|(P(v_1)(\cdot,t) - P(v_2)(\cdot,t))p(v_2)(\cdot,t)\|_{L^2(\R^n)}
\le \ M_p \ \|P(v_1)(\cdot,t)-P(v_2)(\cdot,t)\|_{L^2(\R^n)}.
\end{align*}
We abbreviate the terms in $P$ as follows and define for $\nu\in\{\varepsilon,\mu\}$ 
\[\Phi_\nu(v)(x,t):= \int_{\R^n} \phi(v(y,t))\Gamma_\nu(x,y)dy.\]
Furthermore, note that
the boundedness of the fraction according to the inequality (\ref{estimateGamma}) yields
\[\left| \frac{\Phi_\mu(v_1)(x,t)}{\Phi_\varepsilon(v_1)(x,t) + \xi}t  -  \frac{\Phi_\mu(v_2)(x,t)}{\Phi_\varepsilon(v_2)(x,t)+ \xi}t  \right|\le 2\frac{\varepsilon^n}{\mu^n}\]
so that we can use the local Lipschitz-continuity of the exponential function denoting the Lipschitz constant  by $L_{exp}$..
For any $t\in(0,T)$ we have
\begin{equation*}
\begin{split}
&\|P(v_1)(\cdot,t)-P(v_2)(\cdot,t)\|_{L^2( \R^n)}  \\
%
= &\left\|A_0(\cdot)\left[\exp\left( \frac{\Phi_\mu(v_1)(\cdot,t)}{\Phi_\varepsilon(v_1)(\cdot,t) + \xi}t \right) - \exp\left( \frac{\Phi_\mu(v_2)(\cdot,t)}{\Phi_\varepsilon(v_2)(\cdot,t)+ \xi}t \right)\right] \right\|_{L^2(\R^n)}\\[2mm]
\le & \ t L_{exp} \ \left\|A_0(\cdot)\left[\frac{\Phi_\mu(v_1)(\cdot,t)}{\Phi_\varepsilon(v_1)(\cdot,t) + \xi} -  \frac{\Phi_\mu(v_2)(\cdot,t)}{\Phi_\varepsilon(v_2)(\cdot,t)+ \xi}\right]  \right\|_{L^2(\R^n)}.\\[2mm]
\end{split}
\end{equation*}
Omitting arguments, we can estimate
\begin{equation*}
\begin{split}
&\left| \frac{\Phi_\mu(v_1)(\cdot,t)}{\Phi_\varepsilon(v_1)(\cdot,t) + \xi} -  \frac{\Phi_\mu(v_2)(\cdot,t)}{\Phi_\varepsilon(v_2)(\cdot,t)+ \xi} \right| \\
=& \left| \frac{\Phi_\mu(v_1)\Phi_\varepsilon(v_2) -\Phi_\mu(v_1)\Phi_\varepsilon(v_1) + \Phi_\mu(v_1)\Phi_\varepsilon(v_1)  - \Phi_\mu(v_2)\Phi_\varepsilon(v_1)+ \Phi_\mu(v_1)\xi - \Phi_\mu(v_2)\xi}{(\Phi_\varepsilon(v_1)+\xi)(\Phi_\varepsilon(v_2)+\xi) }  \right|\\[2mm]
\le & \left| \frac{\Phi_\mu(v_1)}{(\Phi_\varepsilon(v_1)+\xi)(\Phi_\varepsilon(v_2)+\xi)}\right| \left| \Phi_\varepsilon(v_1) -\Phi_\varepsilon(v_2) \right| \\
& \ +  \left(\left| \frac{\Phi_\varepsilon(v_1)}{(\Phi_\varepsilon(v_1)+\xi)(\Phi_\varepsilon(v_2)+\xi)}\right| + \left|\frac{\xi}{(\Phi_\varepsilon(v_1)+\xi)(\Phi_\varepsilon(v_2)+\xi) }\right|\right) \left| \Phi_\mu(v_1) - \Phi_\mu(v_2) \right|\\[2mm]
\le & \frac{\varepsilon^n}{\mu^n\xi} |\Phi_\varepsilon(v_1) -\Phi_\varepsilon(v_2) | + \frac{2}{\xi} |\Phi_\mu(v_1) - \Phi_\mu(v_2) |,
\end{split}
\end{equation*}
where we have estimated the term
\begin{equation*}
\begin{split}
\left| \frac{\Phi_\mu(v_1)}{(\Phi_\varepsilon(v_1)+\xi)(\Phi_\varepsilon(v_2)+\xi)}\right|\le \frac{1}{\xi} \left| \frac{\Phi_\mu(v_1)}{(\Phi_\varepsilon(v_1)+\xi)}\right|\le \frac{1}{\xi}\left(\frac{\varepsilon}{\mu}\right)^n,
\end{split}
\end{equation*}
applying the inequality (\ref{estimateGamma}) and $(\Phi_\varepsilon(v_1)+\xi)\Phi_\varepsilon(v_2)\ge 0$ by assumption. The other terms are estimated in a similar way.\\
Therefore,
\begin{equation*}
\begin{split}
& \left\|A_0(\cdot)\left[\frac{\Phi_\mu(v_1)(\cdot,s)}{\Phi_\varepsilon(v_1)(\cdot,s) + \xi} -  \frac{\Phi_\mu(v_2)(\cdot,s)}{\Phi_\varepsilon(v_2)(\cdot,s)+ \xi}\right]  \right\|_{L^2(\R^n)}\\[2mm]
\le &\ \frac{2}{\xi} \left\|A_0(\cdot)\left[\Phi_\mu(v_1)(\cdot,s)-\Phi_\mu(v_2)(\cdot,s)\right]\right\|_{L^2(\R^n)} + \frac{\varepsilon^n}{\xi\mu^n} \left\|A_0(\cdot)\left[\Phi_\varepsilon(v_1)(\cdot,s)-\Phi_\varepsilon(v_2)(\cdot,s)\right]\right\|_{L^2(\R^n)}.
\end{split}
\end{equation*}
We now apply the Lipschitz continuity of the function $\phi$ and get
\begin{equation*}
\begin{split}
&\left\|A_0(\cdot)\left[\Phi_\nu(v_1)(\cdot,s)-\Phi_\nu(v_2)(\cdot,s)\right] \right\|_{L^2(\R^n)}\\[2mm]
=&\left( \int_{\R^n}\left| A_0(x)  \int_{R^n} (\phi(v_1(y,s))- \phi(v_2(y,s)))\Gamma_\nu(x,y)dy\right|^2 dx\right)^{\frac{1}{2}}\\[2mm]
\le & \left( \int_{\R^n} \left( \int_{\R^n} (\phi(v_1(y,s))-\phi(v_2(y,s)))^2dy\right) \left( \int_{\R^n}A_0(x)^2\Gamma_\nu^2(x,y)dy\right) dx\right)^{\frac{1}{2}}\\[2mm]
&\le L_\phi \|v_1-v_2\|_{L^2(\R^n)} \left( \int_{\R^n}\int_{\R^n} A_0^2(x)\Gamma_\nu^2(x,y)dydx\right)^{\frac{1}{2}},
\end{split}
\end{equation*}
for $\nu\in\{\varepsilon,\mu\}$. \\

Note that, since the kernel function $\Gamma_\nu$ is a multivariate Gaussian probability density function, we obtain
\[ess\sup_{x\in\R^n}\int_{\R^n}\frac{1}{(2\pi\nu^2)^n}\exp\left(-\frac{\|x-y\|_2^2}{\nu^2}\right) dy = \frac{1}{(2\nu\sqrt{\pi})^n}<\infty.\]
Since we have assumed
\[\|A_0\|_{L^2(\R^n)}<\infty,\]
we can finally deduce
\begin{equation*}
\begin{split}
\|P(v_1)(\cdot,s)-P(v_2)(\cdot,s)\|_{L^2( \R^n)} \le s \kappa_3\|v_1(\cdot,s)-v_2(\cdot,s)\|_{L^2(\R^n)},
\end{split}
\end{equation*}
with a positive constant $\kappa_3$.\\
We have from (\ref{star5}) using (\ref{star6}) and (\ref{star7})
\begin{equation*}
\begin{split}
\| \mathcal{P}(k_1)(\cdot,t)  - \mathcal{P}(k_2)(\cdot,t) \|_{L^2(\R^n)} \leq c_2 \|k_1 - k_2\|_{L^2(\R^n)} \quad \mbox{for all}~~ t \in [0,T]
\end{split}
\end{equation*}
\end{proof}

\begin{theorem}\label{Existence_Ramsey_unbounded}
Let $k_0\in L^2(\R^n ),\ c \in L^2(0,T;H^{-1}(\R^n))$ and let the functions $p$, $\phi$, and $A_0$ satisfy Assumption \ref{AssP}. Then the capital accumulation equation in the nonlocal spatial Ramsey model with endogenous productivity growth (\ref{semilineq}) admits a unique weak solution $k\in W (0,T)$.
\end{theorem}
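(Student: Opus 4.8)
The plan is to recast the semilinear PIDE \eqref{semilineq} as an abstract Cauchy problem on the Gelfand triple $H^1(\R^n)\hookrightarrow L^2(\R^n)\hookrightarrow H^{-1}(\R^n)$ and to apply a fixed-point argument in the space $W(0,T)$, feeding the nonlinearity $\mathcal{P}$ into the linear theory of \cite{dautray}. Concretely, for a given $w\in L^2(0,T;L^2(\R^n))$ I would consider the \emph{linear} problem
\begin{equation*}
\langle k_t, v\rangle + {\bf a}(k,v) = \langle \mathcal{P}(w) - c, v\rangle \quad \forall v\in H^1(\R^n),\qquad k(\cdot,0)=k_0.
\end{equation*}
By Lemma~\ref{regularityP}, $\mathcal{P}(w)\in L^2(0,T;L^2(\R^n))\subset L^2(0,T;H^{-1}(\R^n))$, so the right-hand side lies in $L^2(0,T;H^{-1}(\R^n))$; together with the continuity and G\r{a}rding inequality of Lemma~\ref{PropyA} and $k_0\in L^2(\R^n)$, the cited result of \cite{dautray} yields a unique $k=:\mathcal{F}(w)\in W(0,T)$. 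This defines a solution map $\mathcal{F}:L^2(0,T;L^2(\R^n))\to W(0,T)\hookrightarrow L^2(0,T;L^2(\R^n))$, and a fixed point of $\mathcal{F}$ is exactly a weak solution of \eqref{semilineq}.

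The core estimate is contraction. Given $w_1,w_2$, set $k_i=\mathcal{F}(w_i)$ and $e=k_1-k_2$; then $e$ solves the linear problem with right-hand side $\mathcal{P}(w_1)-\mathcal{P}(w_2)$ and zero initial data. Testing with $e$, using the G\r{a}rding inequality and absorbing the $c_2\|e\|_{L^2}^2$ term via Gronwall, one obtains for every $s\in[0,T]$
\begin{equation*}
\|e(\cdot,s)\|_{L^2(\R^n)}^2 \le C\int_0^s \|\mathcal{P}(w_1)(\cdot,t)-\mathcal{P}(w_2)(\cdot,t)\|_{H^{-1}(\R^n)}^2\,dt \le C c_2^2 \int_0^s \|w_1(\cdot,t)-w_2(\cdot,t)\|_{L^2(\R^n)}^2\,dt,
\end{equation*}
where \eqref{Plip} was used. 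This is the classical Volterra-type inequality: iterating $\mathcal{F}$ $m$ times gives a factor $(C c_2^2 T)^m/m!$ in front of $\|w_1-w_2\|_{L^2(0,T;L^2(\R^n))}^2$, so some power $\mathcal{F}^m$ is a strict contraction on the Banach space $L^2(0,T;L^2(\R^n))$. The Banach fixed-point theorem then provides a unique fixed point $k$, and since $k=\mathcal{F}(k)$ it automatically belongs to $W(0,T)$; uniqueness in $W(0,T)$ follows because any weak solution is a fixed point of $\mathcal{F}$.

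The main obstacle is purely the careful bookkeeping in the energy estimate on the \emph{unbounded} domain $\R^n$: one must check that the nonlocal bilinear term in ${\bf a}$ is genuinely controlled as in Lemma~\ref{PropyA} without boundary terms, that the integration-by-parts/testing procedure is legitimate in $W(0,T)$ (i.e.\ the formula $\tfrac{d}{dt}\|e\|_{L^2}^2 = 2\langle e_t,e\rangle$ holds, which is standard for the Gelfand triple), and that the Gronwall argument indeed lets $c$ drop out of the difference estimate. Once the linear well-posedness from \cite{dautray} and the Lipschitz bound \eqref{Plip} are in hand, the fixed-point step is routine; the only subtlety worth stating explicitly is that the contraction constant involves $\exp(d_2 T)$-type factors coming from the exponential in $\mathcal{P}$, which is why one passes to an iterate $\mathcal{F}^m$ rather than asking $\mathcal{F}$ itself to contract.
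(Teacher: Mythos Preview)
Your proposal is correct and uses the same essential ingredients as the paper---linear well-posedness from \cite{dautray} (via Lemma~\ref{PropyA}), the Lipschitz bound \eqref{Plip} on $\mathcal{P}$ from Lemma~\ref{regularityP}, and Banach's fixed-point theorem---but the packaging differs. The paper runs the contraction in $\mathcal{C}([0,T^*];L^2(\R^n))$ on a \emph{short} interval $[0,T^*]$, shows directly that the solution map $S$ is a contraction there by an explicit energy estimate, and then concatenates finitely many subintervals to reach $[0,T]$. You instead work on the full interval in $L^2(0,T;L^2(\R^n))$, derive the pointwise-in-$s$ Volterra inequality, and invoke the factorial decay $(CT)^m/m!$ to make some iterate $\mathcal{F}^m$ contract. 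Your route is slightly slicker in that it avoids the time-slicing and gluing step, and it makes transparent why the $\exp(d_2T)$-type constants in \eqref{Plip} do not obstruct global existence; the paper's route, on the other hand, produces a more explicit contraction constant and stays in the sup-in-time norm throughout, which dovetails with the embedding $W(0,T)\hookrightarrow \mathcal{C}([0,T];L^2(\R^n))$ used later. Both arguments are standard variants of the same Picard scheme and yield the identical conclusion.
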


\begin{proof}
We give the proof to Theorem \ref{Existence_Ramsey_unbounded}, following a common technique which is based on Banach's fixed point theorem and the Lipschitz continuity of the nonlinearity, which we have already shown in Lemma \ref{regularityP}.\\
First, we fix $T^*\in(0,T)$. We show that the solution mapping, which maps a right-hand side to the solution of the linearized differential equation, is a contraction for $T^*$ sufficiently small. \\
Let $v\in  \mathcal{C}([0,T^*];L^2(\R^n))$, for short $\mathcal{C}(0,T^*;L^2(\R^n))$. As proven in Lemma \ref{regularityP}, $\mathcal{P}(v)\in L^2(0,T^*;L^2(\R^n ))$, where we have used the inequality 
\[\|v\|_{L^2(0,T^*;L^2(\R^n))}\le \|v\|_{L^\infty(0,T^*;L^2(\R^n))}\]
for all finite $T^*$. According to Theorem 2 by \citet[p. 513]{dautray} there exists a unique weak solution $u\in W(0,T^*)$ of 
\[u_t  -\mathcal{L}(u)+\delta u = \mathcal{P}(v) - f\ \mbox{\hspace{1cm} in } \R^n\times(0,T^*),\]
with $u(\cdot,0)=k_0(\cdot)$ on $\R^n$. The embedding 
\[W(0,T^*)\hookrightarrow \mathcal{C}(0,T^*; L^2(\R^n))\]
guarantees that $u$ is an element of $\mathcal{C}(0,T^*;L^2(\R^n ))$ (cf. \citet[p. 521]{dautray}). This defines the operator 
\[S: \mathcal{C}(0,T^*;L^2(\R^n ))\to \mathcal{C}(0,T^*;L^2(\R^n)),\ S(v)=u.\] 
In the following, we prove that $S$ is a contraction. Consider the difference \\ $S(v_1)-S(v_2)$ for two arbitrary functions $v_1,v_2\in  \mathcal{C}(0,T^*;L^2(\R^n ))$ with $S(v_1)=u_1$ and $S(v_2)=u_2$. We define the function $u:=u_1-u_2\in W(0,T^*)$ and deduce from the weak formulation with $u$ as the variational variable
\begin{equation*}
\begin{split}
&\int_0^t \int_{\R^n} u_t(x,s)u(x,s)\ dx + {\bf a} (u,u)(s)\ ds=\\
&\int_0^t\int_{\R^n} (\mathcal{P}(v_1)(x,s)-\mathcal{P}(v_2)(x,s))u(x,s)\   dxds,
\end{split}
\end{equation*}
for all $t\in[0,T^*]$.\\ 
We can estimate the left-hand side (LHS) using a calculation from Lemma \ref{PropyA} (i):
\begin{equation*}
\begin{split}
LHS&\ge \int_0^t\int_{\R^n}u_t(x,s)u(x,s)\  dxds+ \alpha \int_0^t\int_{\R} |\nabla_x u|^2 (x,s)\  \  dxds\\
&+ \delta \int_0^t \int_{\R^n} u^2(x,s)\  dxds - \kappa_1 \int_0^t \|\nabla_x u(s)\|_{L^2(\R^n )}\|u(s)\|_{L^2(\R^n )}ds,\\[4mm]
\end{split}
\end{equation*}
where 
\[0\le \kappa_1 :=  \beta \int_{\R^n} |z| \frac{1}{\sqrt{(2\pi \varepsilon^2)^n}}\exp\left(\frac{-\|z\|_2^2}{2\varepsilon^2} \right)\ dz .\]
Applying Young's inequality for an arbitrary $\eta>0$, we get
\begin{equation*}
\begin{split}
LHS & \ge \int_0^t\int_{\R^n}u_t(x,s)u(x,s)\  dxds + \alpha \int_0^t\int_{\R} |\nabla_xu|^2 (x,s)\  \  dxds\\
&+ \delta \int_0^t \int_{\R^n} u^2(x,s)\  \ dxds  -  \int_0^t \frac{\eta}{2}\|\nabla_xu(s)\|^2_{L^2(\R^n)}+ \frac{\kappa_1^2}{2\eta}\|u(s)\|^2_{L^2(\R^n)}\ ds. \\
\end{split}
\end{equation*}
We choose $\eta\le 2\alpha$, which yields together with the identity
\[\int_0^t\int_{\R^n}u_t(x,s)u(x,s)\  dxds = \frac{1}{2} \|u(\cdot,t)\|^2_{L^2(\R^n)}\]
the following estimate for the left-hand side
\begin{equation*}
\begin{split}
 LHS  &\ge \frac{1}{2} \|u(\cdot,t)\|^2_{L^2(\R^n)} + \int_0^t \delta\|u(\cdot,s)\|^2_{L^2(\R^n)} + \frac{\kappa_1^2}{2\eta}
\|u(\cdot,s)\|^2_{L^2(\R^n)}\ ds .
\end{split}
\end{equation*}
For the right-hand side (RHS), we get
\begin{equation*}
\begin{split}
&\int_0^t\int_{\R^n} (\mathcal{P}(v_1)(x,s)-\mathcal{P}(v_2)(x,s))(u(x,s))\ dxds\\
&  + \int_0^t s \kappa_3 \|v_1(\cdot,s)-v_2(\cdot,s)\|_{L^2(\R^n)} \|u(\cdot,s)\|_{L^2(\R^n)} ds\\
\le & \max\{\kappa_2,\kappa_3\} \int_0^t (s+e^s)  \|v_1(\cdot,s)-v_2(\cdot,s)\|_{L^2(\R^n)} \|u(\cdot,s)\|_{L^2(\R^n)} ds
\end{split}
\end{equation*}
according to Lemma \ref{regularityP}.\\

Applying Young's inequality for a $\varsigma>0$ and denoting by $\kappa_\infty:=\max\{\kappa_2,\kappa_3\}$ yields
\begin{equation*}
\begin{split}
& \kappa_\infty \int_0^t (s+e^s)  \|v_1(\cdot,s)-v_2(\cdot,s)\|_{L^2(\R^n)} \|u(\cdot,s)\|_{L^2(\R^n)} ds\\
\le & \frac{\kappa_\infty^2}{2\varsigma}\int_0^t (s+e^s)^2 \|v_1(\cdot,s)-v_2(\cdot,s)\|_{L^2(\R^n)}^2ds + \frac{\varsigma}{2} \int_0^t  \|u(\cdot,s)\|^2_{L^2(\R^n)} ds.
\end{split}
\end{equation*}

All in all, we have
\begin{equation*}
\begin{split}
& \frac{1}{2} \|u(\cdot,t)\|^2_{L^2(\R^n)} +  \int_0^t \delta\|u(\cdot,s)\|^2_{L^2(\R^n)} + \frac{\kappa_1^2}{2\eta}
\|u(\cdot,s)\|^2_{L^2(\R^n)}\ ds \\
&\le \frac{ \kappa_\infty^2}{2\varsigma}\int_0^t (s+e^s)^2 \|v_1(\cdot,s)-v_2(\cdot,s)\|_{L^2(\R^n)}^2ds + \frac{\varsigma}{2} \int_0^t  \|u(\cdot,s)\|^2_{L^2(\R^n)} ds.
\end{split}
\end{equation*}

Sorting the inequality leads to 
\begin{equation*}
\begin{split}
\frac{1}{2} \|u(\cdot,t)\|^2_{L^2(\R^n)} \le & \left( \frac{\varsigma}{2}  - \frac{\kappa_1^2}{2\eta} - \delta\right) \int_0^t \|u(\cdot,s)\|^2_{L^(\R^n)}ds \\
&\  +\frac{ \kappa_\infty^2}{2\varsigma}\int_0^t (s+e^s)^2 \|v_1(\cdot,s)-v_2(\cdot,s)\|_{L^2(\R^n)}^2ds,
\end{split}
\end{equation*}
where we can choose the parameters $\varsigma$ and $\eta$, such that $(\varsigma/2-\kappa_1^2/2\eta - \delta)\ge 0$. Taking the maximum for all $t\in[0,T^*]$, we end up with
\begin{equation*}
\begin{split}& \frac{1}{2} \|u_1-u_2\|_{L^{\infty}(0,T^*;L^2( \R^n))}^2 \\
& \le T^* \left( \frac{\varsigma}{2} - \frac{\kappa_1^2}{2\eta} - \delta\right) \|u_1-u_2\|^2_{L^{\infty}(0,T^*;L^2( \R^n))}  + \frac{ \kappa_\infty^2}{2\varsigma}\int_0^{T^*}(s+e^s)^2ds\|v_1-v_2\|^2_{L^{\infty}(0,T^*;L^2( \R^n))}
\end{split} 
\end{equation*}
as $\delta,\kappa^2\ge 0$. Hence, we have
 \begin{equation*}
 \begin{split}
\|u_1-u_2\|^2_{L^{\infty}(0,T^*;L^2(\R^n))} \le C(T^*)\ \|v_1-v_2\|^2_{L^{\infty}(0,T^*;L^2(\R^n))}, \hspace*{1cm} (\#)
 \end{split}
 \end{equation*}
 with
 \[C(T^*):=\frac{\hat{\kappa}_\infty(\frac{T^{*^3}}{3} + 2e^{T^*}T^*-2e^{T^*}+2+\frac{1}{2}e^{2T^*}-\frac{1}{2})}{\frac{1}{2}- T^*\left( \frac{\varsigma}{2} - \frac{\kappa_1^2}{2\eta} - \delta\right) }.\]
Taking the limit $T^*\to 0$ yields
\[C(T^*)\to 0.\] Especially, the exists a $T^*\in\R_+$, such that $C(T^*)<1$. \\

Note that we can divide by $\frac{1}{2}- T^*\left( \frac{\varsigma}{2} - \frac{\kappa_1^2}{2\eta} - \delta\right)$, because we can choose $\varsigma,\eta$ appropriately, such that the term is positive, at least for small $T^*$. So, all in all, we have shown that $S$ is a contraction for $T^*$ small enough and we can apply the fixed point theorem of Banach which yields the existence of a unique fixed point $S(u)=u$ on $W(0,T^*)$.  Now, we have to construct a solution $k$ on the whole time-space-cylinder, but since the local solution $u$ is independent of the time horizon $T^*$, we can proceed on the interval $[T^*,2T^*]$ using the same arguments as above with a new initial condition $u(\cdot,T^*)$. After finitely many steps, we can construct a weak solution $k\in W(0,T)$ of (\ref{semilineq}). Moreover, this solution is unique, which follows from the inequality $(\#)$.
\end{proof}

\begin{lemma}\label{a_priori_L2}
Let $c\in L^2(0,T;L^2(\R^n))$. If the bilinear form ${\bf a} $ is coercive, then there exist two constants $\mathrm{C}_1, \mathrm{C}_2> 0$ such that the solution $k\in W (0,T)$ of (\ref{semilineq}) satisfies the following a priori estimate:
\begin{equation}\label{apriori_semilinear}
\|k\|_{L^2(0,T;H^1(\R^n))} \ +\ \mathrm{C}_1\|k\|_{L^\infty(0,T;L^2(\R^n))} \le \mathrm{C}_2 \left( \|k_0\|_{L^2(\R^n)} + \|c\|_{L^2(0,T;L^2(\R^n))} + 1 \right).
\end{equation}
\end{lemma}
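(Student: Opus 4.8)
The plan is to derive the a priori estimate by the standard energy method: use $k$ itself as the test function in the weak formulation of (\ref{semilineq}), integrate in time, and combine the resulting identity with the G\r{a}rding inequality from Lemma~\ref{PropyA}(ii) (upgraded to coercivity by hypothesis, so that $c_2 = 0$ can be taken, i.e. ${\bf a}(u,u) \ge c_3 \|u\|_{H^1}^2$) and the boundedness of $\mathcal{P}$ from Lemma~\ref{regularityP}. Testing with $v = k(\cdot,s)$ and using the identity $\int_0^t \int_{\R^n} k_t k\,dx\,ds = \tfrac12\|k(\cdot,t)\|_{L^2}^2 - \tfrac12\|k_0\|_{L^2}^2$ together with ${\bf a}(k,k)(s) \ge c_3\|k(\cdot,s)\|_{H^1}^2$ gives, for every $t \in [0,T]$,
\[
\tfrac12\|k(\cdot,t)\|_{L^2(\R^n)}^2 + c_3 \int_0^t \|k(\cdot,s)\|_{H^1(\R^n)}^2\,ds \le \tfrac12\|k_0\|_{L^2(\R^n)}^2 + \int_0^t \langle \mathcal{P}(k)(\cdot,s) - c(\cdot,s),\, k(\cdot,s)\rangle\,ds.
\]

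Next I would bound the right-hand side. For the $c$ term, since $c \in L^2(0,T;L^2(\R^n))$ here (stronger than the $H^{-1}$ hypothesis of the existence theorem), write $\int_0^t \int_{\R^n} c\,k \le \|c\|_{L^2(0,T;L^2)}\|k\|_{L^2(0,t;L^2)}$ and apply Young's inequality to split off $\tfrac{c_3}{4}\int_0^t\|k\|_{H^1}^2$ against a constant times $\|c\|_{L^2(0,T;L^2)}^2$; the leftover $\|k\|_{L^2}^2$ piece is harmless. For the $\mathcal{P}$ term, use (\ref{Pbdd}): $\int_0^t \int_{\R^n} \mathcal{P}(k)k \le \int_0^t c_1\|k(\cdot,s)\|_{L^2}^2\,ds$, which is exactly the form needed to feed into Gr\"onwall. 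Absorbing the $H^1$-controllable terms on the left, one arrives at
\[
\tfrac12\|k(\cdot,t)\|_{L^2(\R^n)}^2 + \tfrac{c_3}{2}\int_0^t \|k(\cdot,s)\|_{H^1(\R^n)}^2\,ds \le C\Bigl(\|k_0\|_{L^2(\R^n)}^2 + \|c\|_{L^2(0,T;L^2(\R^n))}^2\Bigr) + C\int_0^t \|k(\cdot,s)\|_{L^2(\R^n)}^2\,ds.
\]
Dropping the (nonnegative) integral term on the left and applying Gr\"onwall's inequality to $t \mapsto \|k(\cdot,t)\|_{L^2}^2$ yields $\|k\|_{L^\infty(0,T;L^2(\R^n))}^2 \le C e^{CT}\bigl(\|k_0\|_{L^2}^2 + \|c\|_{L^2(0,T;L^2)}^2\bigr)$. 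Re-inserting this bound into the displayed inequality then controls $\int_0^T\|k(\cdot,s)\|_{H^1}^2\,ds$ as well. Finally, take square roots, use $\sqrt{a+b} \le \sqrt a + \sqrt b$ and $\sqrt{ab} \le \tfrac12(a+b)$ to pass from the squared estimate to the linear-in-norms form stated in (\ref{apriori_semilinear}), with the additive $+1$ absorbing any constant slack; choose $\mathrm{C}_1$, $\mathrm{C}_2$ accordingly.

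The only genuinely delicate point is the treatment of the nonlinear term $\int_0^t\int_{\R^n}\mathcal{P}(k)k$: one must resist estimating it with an $H^1$ norm of $k$ (which Lemma~\ref{regularityP} does not provide and which could not be absorbed on the left in a $T$-independent way), and instead use precisely the $L^2$-to-$L^2$ bound (\ref{Pbdd}), so that the term enters only through Gr\"onwall and the final constant $\mathrm{C}_2$ carries an unavoidable $e^{CT}$ dependence. Everything else — the integration by parts in time, the use of coercivity, the two applications of Young's inequality — is routine, but care is needed that the $c_3$ from coercivity genuinely multiplies the full $\|k\|_{H^1}^2$ (this is where the hypothesis "if ${\bf a}$ is coercive" is used, as opposed to merely the G\r{a}rding inequality, which would leave an uncontrolled $+c_2\|k\|_{L^2}^2$ that Gr\"onwall still handles but would make the bookkeeping slightly messier).
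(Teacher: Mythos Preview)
Your proof is correct and shares the same scaffold as the paper's: test the weak formulation with $k$, invoke the assumed coercivity ${\bf a}(k,k)\ge c_{coer}\|k\|_{H^1}^2$, use $\int_0^t\langle k_t,k\rangle\,ds = \tfrac12\|k(t)\|_{L^2}^2-\tfrac12\|k_0\|_{L^2}^2$, and treat the $c$-term by Cauchy--Schwarz plus Young. The only real divergence is in the handling of $\int_0^t\langle\mathcal{P}(k),k\rangle\,ds$. The paper keeps the explicit $s$-dependence, bounding $\|\mathcal{P}(k)(\cdot,s)\|_{L^2}\le \|A_0\|_{L^\infty}L_p\, e^{s\varepsilon^n/\mu^n}\|k(\cdot,s)\|_{L^2}$, and then applies a further Young split so that part of the resulting integral becomes a \emph{pure constant} (an integral of $e^{2s\varepsilon^n/\mu^n}$ over $[0,T]$) while the remainder is a small multiple of $\|k\|_{L^2(0,T;H^1)}^2$ absorbable on the left by taking the Young parameters $\eta_1,\eta_2$ large; no Gr\"onwall step is used, and the surviving constant is exactly the source of the ``$+1$'' in the stated estimate. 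You instead invoke the uniform bound (\ref{Pbdd}) to get $\int_0^t c_1\|k(s)\|_{L^2}^2\,ds$ and close with Gr\"onwall. Your route is cleaner; the resulting bound is actually homogeneous in $(k_0,c)$, so the ``$+1$'' becomes superfluous in your version, and your constant carries an $e^{CT}$ from Gr\"onwall just as the paper's carries $e^{2T\varepsilon^n/\mu^n}$ from the direct estimate.
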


\begin{proof}
The coercivity assumption on ${\bf a} $ yields a constant $c_{coer}>0$ such that \\ ${\bf a} (k,k)\ge c_{coer} \|k\|^2_{H^1(\R^n)}$ for all $k\in {H^1(\R^n)}$. Now, we fix a $t\in[0,T]$ and derive the weak formulation of (\ref{semilineq}) for the test function $k\in W (0,T)$. We then have
\begin{equation*}
\begin{split}
\int_0^t \int_{\R^n} \frac{\partial k}{\partial s} k\  \ dxds + \int_0^t {\bf a} (k(s),k(s))\ ds = \int_0^t \int_{\R^n} \mathcal{P}(k) k\ dxds - \int_0^t \int_{\R^n} c\ k\  dxds .
\end{split}
\end{equation*}
To estimate the right-hand side, we exploit the Lipschitz continuity of $p$, $p(0)=0$, and the boundedness of the fractional term in the productivity growth operator (\ref{estimateGamma}). Altogether, this yields
\begin{equation*}
\begin{split}
RHS & \le \int_0^t \|\mathcal{P}(k)\|_{L^2(\R^n)}\|k\|_{L^2(\R^n)} ds + \int_0^t \|c\|_{L^2(\R^n)}\|k\|_{L^2(\R^n)}ds\\
&\le \|A_0\|_{L^\infty(\R^n)}  \int_0^t \left(\int_{\R^n}( e^{\frac{s\varepsilon^n}{\mu^n}} p(k))^2  dx \right)^{\frac{1}{2}} \left( \int_{\R^n} |k|^2  dx \right)^{\frac{1}{2}} ds+ \int_0^t \|c\|_{L^2(\R^n)}\|k\|_{L^2(\R^n)}ds\\
&\le \|A_0\|_{L^\infty(\R^n)} L_p  \int_0^t e^{\frac{s\varepsilon^n}{\mu^n}} \|k\|^2_{L^2(\R^n)} ds +  \int_0^t \|c\|_{L^2(\R^n)}\|k\|_{L^2(\R^n)}ds.\\
\end{split}
\end{equation*}
In order to estimate the left-hand side, we use the coercivity assumption for ${\bf a} $,
\begin{equation*}
\begin{split}
LHS& = \int_0^t \int_{\R^n} \frac{\partial k}{\partial s} k  \ dxds + \int_0^t {\bf a}  (k,k) ds\\
&\ge \int_0^t \int_{\R^n} \frac{\partial k}{\partial s} k  \ dxds + c_{coer} \int_0^t \|k\|^2_{H^1(\R^n)}ds\\
&= \frac{1}{2} \|k(t)\|^2_{L^2(\R^n)} - \frac{1}{2} \|k_0\|^2_{L^2(\R^n)} + c_{coer} \int_0^t \|k\|^2_{H^1(\R^n)}ds.
\end{split}
\end{equation*}

Combining both estimates and applying Young's inequality with $\eta_1,\eta_2>0$, and taking the maximum of all $t\in[0,T]$, we have 

\begin{equation*}
\begin{split}
& \frac{1}{2} \|k\|^2_{L^\infty(0,T;L^2(\R^n))}+ c_{coer} \|k\|^2_{L^2(0,T;H^1(\R^n))} \\
\le &  \|A_0\|_{L^\infty(\R^n)} L_p \left( \frac{\eta_1\mu^n}{4\varepsilon^n}\left(e^{\frac{2T\varepsilon^n}{\mu^n}}-1\right) + \frac{1}{2\eta_1}  \|k\|_{L^2(0,T;H^1(\R^n))}^2 \right)  \\
& + \frac{\eta_2}{2}   \|c\|_{L^2(0,T;L^2(\R^n))}^2  + \frac{1}{2\eta_2}\|k\|^2_{L^2(0,T;H^1(\R^n))} + \frac{1}{2} \|k_0\|^2_{L^2(\R^n)}.
\end{split}
\end{equation*}

We multiply with $2$, sort all terms, and end up with
\begin{equation*}
\begin{split}
& \|k\|^2_{L^\infty(0,T;L^2(\R^n))} + \left(2c_{coer} - \frac{\|A_0\|_{L^\infty(\R^n)} L_p}{\eta_1} - \frac{1}{\eta_2}\right)\|k\|^2_{L^2(0,T;H^1(\R^n))}\\
&\le  \eta_1\ \|A_0\|_{L^\infty(\R^n)} L_p  \frac{\mu^n}{2\varepsilon^n}\left(e^{\frac{2\varepsilon^n \ T}{\mu^n}}-1\right)  + \eta_2 \|c\|_{L^2(0,T;L^2(\R^n))}^2 +  \|k_0\|^2_{L^2(\R^n)}.
\end{split}
\end{equation*}

Since $\eta_1,\eta_2>0$ were arbitrary, we choose both constants large enough such that
 \[\left(2c_{coer} - \frac{\|A_0\|_{L^\infty(\R^n)} L_p}{\eta_1} - \frac{1}{\eta_2}\right)> 0,\]
which completes the proof.\\
\end{proof}

The following a priori estimate of the weak solution is crucial for the proof of existence of an optimal control.

\begin{lemma}\label{Ta_priori_W0T}
There exists a constant $\tilde{C}>0$ such that weak solution of (\ref{semilineq}) satisfies
\begin{equation}\label{a_priori_W0T}
\|k\|_{W (0,T)} \le \tilde{C}(\|c\|_{L^2(0,T;L^2(\R^n))} + \|k_0\|_{L^2(\R^n)} + 1).
\end{equation}
\end{lemma}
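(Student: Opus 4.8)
\textbf{Proof plan for Lemma \ref{Ta_priori_W0T}.}

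The plan is to upgrade the a priori estimate of Lemma \ref{a_priori_L2} from the norm on $L^2(0,T;H^1(\R^n)) \cap L^\infty(0,T;L^2(\R^n))$ to the full $W(0,T)$ norm, which also controls $\|k_t\|_{L^2(0,T;H^{-1}(\R^n))}$. Recall that $\|k\|_{W(0,T)}^2 = \|k\|_{L^2(0,T;H^1(\R^n))}^2 + \|k_t\|_{L^2(0,T;H^{-1}(\R^n))}^2$, so once the time-derivative term is bounded by the right-hand side of \eqref{a_priori_W0T}, the result follows by combining with Lemma \ref{a_priori_L2}. First I would note that $k$ solves the weak form of \eqref{semilineq}, so for every test function $v \in H^1(\R^n)$ and a.e.\ $t$,
\begin{equation*}
\langle k_t(\cdot,t), v \rangle = -{\bf a}(k(\cdot,t), v) + \int_{\R^n}\bigl(\mathcal{P}(k)(x,t) - c(x,t)\bigr)v(x)\,dx.
\end{equation*}

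Next I would estimate the dual norm of $k_t(\cdot,t)$ by taking the supremum over $v$ with $\|v\|_{H^1(\R^n)} \le 1$. The first term is bounded using the continuity of ${\bf a}$ from Lemma \ref{PropyA}(i): $|{\bf a}(k(\cdot,t),v)| \le c_1 \|k(\cdot,t)\|_{H^1(\R^n)} \|v\|_{H^1(\R^n)}$. The second term is bounded by $\bigl(\|\mathcal{P}(k)(\cdot,t)\|_{L^2(\R^n)} + \|c(\cdot,t)\|_{L^2(\R^n)}\bigr)\|v\|_{L^2(\R^n)}$, and here I invoke the boundedness of $\mathcal{P}$ from Lemma \ref{regularityP}, namely $\|\mathcal{P}(k)(\cdot,t)\|_{L^2(\R^n)} \le c_1 \|k(\cdot,t)\|_{L^2(\R^n)} \le c_1 \|k(\cdot,t)\|_{H^1(\R^n)}$ (the constant in \eqref{Pbdd} absorbs the factor $\exp(2T\varepsilon^n/\mu^n)L_p^2$). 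Thus $\|k_t(\cdot,t)\|_{H^{-1}(\R^n)} \le C\bigl(\|k(\cdot,t)\|_{H^1(\R^n)} + \|c(\cdot,t)\|_{L^2(\R^n)}\bigr)$ for a constant $C$ depending only on $c_1$ and the data. Squaring and integrating over $[0,T]$ gives
\begin{equation*}
\|k_t\|_{L^2(0,T;H^{-1}(\R^n))}^2 \le 2C^2\bigl(\|k\|_{L^2(0,T;H^1(\R^n))}^2 + \|c\|_{L^2(0,T;L^2(\R^n))}^2\bigr).
\end{equation*}

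Finally I would substitute the bound on $\|k\|_{L^2(0,T;H^1(\R^n))}$ from Lemma \ref{a_priori_L2}, which states $\|k\|_{L^2(0,T;H^1(\R^n))} \le \mathrm{C}_2(\|k_0\|_{L^2(\R^n)} + \|c\|_{L^2(0,T;L^2(\R^n))} + 1)$, into the above, add $\|k\|_{L^2(0,T;H^1(\R^n))}^2$ once more, take square roots, and collect constants to obtain \eqref{a_priori_W0T} with a suitable $\tilde{C}$. I do not anticipate a genuine obstacle here; the only point requiring a little care is the invocation of coercivity of ${\bf a}$, which Lemma \ref{a_priori_L2} assumes as a hypothesis — one should either carry that assumption forward or note that the G\r{a}rding inequality from Lemma \ref{PropyA}(ii) together with the $\delta$-term (or a shift argument) supplies it under a mild parameter condition; in any case the statement of Lemma \ref{Ta_priori_W0T} implicitly relies on the same standing hypotheses as Lemma \ref{a_priori_L2}, so I would simply cite it.
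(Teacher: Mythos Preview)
Your proposal is correct and follows essentially the same strategy as the paper: split $\|k\|_{W(0,T)}^2$ into the $L^2(0,T;H^1)$ part (handled by Lemma \ref{a_priori_L2}) and the $L^2(0,T;H^{-1})$ part of $k_t$, then bound the latter via the weak formulation using continuity of the bilinear form, boundedness of $\mathcal{P}$, and the $L^2$ bound on $c$. The only cosmetic difference is that the paper decomposes the right-hand side into five separate functionals $F_1,\dots,F_5$ (one for each term of ${\bf a}$, one for $\mathcal{P}$, one for $c$) and estimates each individually, whereas you invoke Lemma \ref{PropyA}(i) directly to handle all of ${\bf a}$ at once --- your packaging is slightly more economical but the content is identical.
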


\begin{proof}
We follow the proof of Theorem 3.13 by \citet[p.121]{troltzsch}. First note that
\[\|k\|_{W(0,T)}^2 = \|k\|^2_{L^2(0,T;H^1(\R^n))} + \|k_t\|^2_{L^2(0,T;H^{-1}(\R^n))}. \]
For the first term, we have already proven in Lemma \ref{a_priori_L2} that there exists a constant $\mathrm{C}>0$ such that the inequality
\[\|k\|^2_{L^2(0,T;H^1(\R^n))} \le \mathrm{C} \left( \|k_0\|_{L^2(\R^n)} + \|c\|_{L^2(0,T;L^2(\R^n))} + 1 \right)^2\]
holds true. In order to estimate the second term, a bit more work has to be done. First, we define the linear functionals $F_i(t):H^1(\R^n)\to \R$, $i=1,...,5$ as
\begin{equation*}
\begin{split}
& F_1(t):\ v \mapsto \langle \alpha \nabla_x k(t),\nabla_x v \rangle _{L^2(\R^n)},\\
& F_2(t):\ v \mapsto \langle \beta \int_{\R^n}(k(y,t)-k(\cdot,t))\Gamma_\varepsilon(\cdot,y)dy,v \rangle _{L^2(\R^n)},\\
& F_3(t):\ v \mapsto \langle \delta k(t),v  \rangle _{L^2(\R^n)},\\
& F_4(t):\ v \mapsto \langle \mathcal{P}(k)(t),v \rangle _{L^2(\R^n)},\\
& F_5(t):\ v \mapsto \langle c(t),v \rangle _{L^2(\R^n)}.\\
\end{split}
\end{equation*}
The weak formulation of the PIDE then yields
\[\|k_t\|_{L^2(0,T;H^{-1}(\R^n))} \le \sum_{i=1}^5 \|F_i\|_{L^2(0,T;H^{-1}(\R^n))}.\]
We estimate all summands separately. It holds
\begin{equation*}
\begin{split}
|F_1(t)v| & = |\alpha\langle \nabla_x k(t),\nabla_xv \rangle_{L^2(\R^n)}| \le \alpha \|\nabla_x k(t)\|_{L^2(\R^n)} \|\nabla_x v\|_{L^2(\R^n)} \\[2mm]
&\le \alpha  \|k(t)\|_{H^1(\R^n)} \|v\|_{H^1(\R^n)},
\end{split}
\end{equation*}
hence
\begin{equation*}
\begin{split}
\|F_1\|^2_{L^2(0,T;H^{-1}(\R^n))} & \le \int_0^T \|F_1(t)\|^2_{H^{-1}(\R^n)} dt
\le \int_0^T \hat{c} \|k(t)\|_{H^1(\R^n)}^2dt 
 = \hat{c}\|k\|^2_{L^2(0,T;H^1(\R^n))}.
\end{split}
\end{equation*}
for a constant $\hat{c}\ge 0$. Finally, from Lemma \ref{a_priori_L2} we know that
\begin{equation*}
\begin{split}
\|F_1\|^2_{L^2(0,T;H^{-1}(\R^n))} & \le \hat{c}\|k\|^2_{L^2(0,T;H^1(\R^n))}
\le \mathrm{C}\left(\|c\|_{L^2(0,T;L^2(\R^n))} + \|k_0\|_{L^2(\R^n)} + 1\right)^2.
\end{split}
\end{equation*}
with $\mathrm{C}> 0$. For the other summands we proceed analogously and deduce
\begin{equation*}
\begin{split}
|F_2(t)v| & \le \beta \kappa  \|k(t)\|_{H^1(\R^n)} \|v\|_{H^1(\R^n)} ,\\[3mm]
|F_3(t)v|&  \le \delta \|k(t)\|_{H^1(\R^n)} \|v\|_{H^1(\R^n)} ,\\[3mm]
|F_4(t)v| &\le \|\mathcal{P}(k)(t)\|_{L^2(\R^n)} \|v\|_{L^2(\R^n)} \le \mathrm{\hat{c}}(t) \|k(t)\|_{H^1(\R^n)} \|v\|_{H^1(\R^n)} ,
\end{split}
\end{equation*}
for contants which we have already derived in the proof of Lemma \ref{a_priori_L2}. For the last term, it holds
\[|F_5(t)v|\le \|c(t)\|_{L^2(\R^n)} \|v\|_{H^1(\R^n)}.\]
Taking the maximum of all $t\in[0,T]$ and combining all estimates for $F_i,\ i=1,...,5$ we get
\[\|k_t\|^2_{L^2(0,T;H^1(\R^n))} \le \hat{\mathrm{C}} \left( \|c\|_{L^2(0,T;L^2(\R^n))} + \|k_0\|_{L^2(\R^n)} + 1\right)^2,\]
and together with the a priori estimate in Lemma \ref{a_priori_L2}, we finally have
\begin{equation*}
\begin{split}
\|k\|^2_{W(0,T)} & = \|k\|^2_{L^2(0,T;H^1(\R^n))} + \|k_t\|^2_{L^2(0,T;H^{-1}(\R^n))} \\
&\le\tilde{\mathrm{C}} \left( \|c\|_{L^2(0,T;L^2(\R^n))} + \|k_0\|_{L^2(\R^n)} + 1\right)^2 .
\end{split}
\end{equation*}

\end{proof}

In the one dimensional case, we are even able to show the essential boundedness and continuity of the weak solution of the capital accumulation equation, raising a result by \cite{lady} from compact subsets of the domain of interest to the whole, untruncated spatial domain:

\begin{lemma}\label{kCont}
We consider the case $n=1$. Let $L_p$ be the Lipschitz constant of the production function $p$ and let the assumptions $(1)-(4)$ hold. Assume that the initial value function $k_0\in L^2(\R)$ is also H\"older continuous of exponent $\lambda>0$ on $\R$. Let $\mathcal{U}_{ad}$ and $k_0$ be chosen such that there exists a constant $\theta>0$ satisfying
\begin{equation}\label{k0Uad}
\begin{split}
(i)\ \ & 4L_p^2\le \theta \mbox{ and } \\
(ii) \ & |k_0(x)| + \int_0^T |c(x,s)|^2ds  < \frac{1}{16L_pTe^{\theta T/2}} \quad \forall c\in\mathcal{U}_{ad}.
\end{split}
\end{equation} 
Then, the weak solution of the spatial nonlocal Ramsey model (\ref{state_control}) is bounded and continuous on $\R\times[0,T]$ for every $c\in\mathcal{U}_{ad}$.
\end{lemma}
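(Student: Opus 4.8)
The plan is to recast (\ref{semilineq}) as a \emph{local} semilinear heat equation with a globally controlled right-hand side, apply the interior and initial regularity results of \cite{lady} on a translation-invariant family of bounded spatial cylinders, and then exhaust $\R$ by these cylinders, checking that the resulting estimates do not degenerate. Since $\int_\R\Gamma_\varepsilon(x,y)\,dy=1$, I would write the equation as $k_t-\alpha\,\Delta k=F$ with $F:=\beta\,(\Gamma_\varepsilon * k)-(\beta+\delta)\,k+\mathcal P(k)-c$, and treat $F$ as a fixed datum. There is no circularity in this: the solution $k\in W(0,T)$ already exists and is unique by Theorem \ref{Existence_Ramsey_unbounded}, so once $k$ is fixed, $F$ is simply a fixed function.

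The first substantive step is to show that $F$ is regular enough on every bounded cylinder, with norms uniform in the cylinder's location, and here the hypothesis $n=1$ is used repeatedly. For $n=1$ one has $\Gamma_\varepsilon\in L^2(\R)$, so Young's inequality gives $\|(\Gamma_\varepsilon * k)(\cdot,t)\|_{L^\infty(\R)}\le\|\Gamma_\varepsilon\|_{L^2(\R)}\|k(\cdot,t)\|_{L^2(\R)}$, bounded uniformly in $t$ by Lemma \ref{a_priori_L2}. The term $\mathcal P(k)$ is \emph{globally bounded}: combining the $L^\infty$-bound $\|P(k)(\cdot,t)\|_{L^\infty(\R)}\le\|A_0\|_{L^\infty(\R)}\exp(T(\varepsilon/\mu)^n)$ from the proof of Lemma \ref{regularityP} with $|p|\le M_p$ yields $|\mathcal P(k)(x,t)|\le\|A_0\|_{L^\infty(\R)}\exp(T(\varepsilon/\mu)^n)M_p$ for all $(x,t)$. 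The zeroth-order term $(\beta+\delta)k$ lies in $L^\infty(0,T;L^2(\R))$ by Lemma \ref{a_priori_L2}, and under hypothesis (ii) the control satisfies $\sup_{x\in\R}\int_0^T|c(x,s)|^2\,ds<\infty$. Consequently, on any cylinder $B_2(x_0)\times(0,T)$ the norm $\|F\|_{L^2(B_2(x_0)\times(0,T))}$ is finite and bounded by a constant independent of $x_0$; moreover $k\in L^2(0,T;H^1(\R))\hookrightarrow L^2(0,T;L^\infty(\R))$ in one dimension, so $k$ already belongs to the a~priori regularity class required by \cite{lady}.

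Next, for each $x_0\in\R$ I would apply the boundedness and Hölder-continuity theorems of \cite{lady} to $k$ on $B_2(x_0)\times(0,T)$, obtaining $\sup_{B_1(x_0)\times(0,T)}|k|\le C\bigl(\|k\|_{L^2(B_2(x_0)\times(0,T))}+\|F\|_{L^2(B_2(x_0)\times(0,T))}+\|k_0\|_{L^\infty(B_2(x_0))}\bigr)$ together with a companion bound for a Hölder seminorm of $k$ on $\overline{B_1(x_0)}\times[\tau,T]$, and — since $k_0$ is Hölder continuous of exponent $\lambda$ — the matching initial-boundary estimate up to $t=0$. The role of hypotheses (i)--(ii) is precisely to certify that Ladyzhenskaya's estimate applies with a \emph{finite} constant $C$: using $p(0)=0$, the reaction term has linear growth $|\mathcal P(k)(x,t)|\le\bigl(\|A_0\|_{L^\infty(\R)}\exp(T(\varepsilon/\mu)^n)\bigr)L_p\,|k(x,t)|$, so the Gronwall/De Giorgi iteration underlying the maximum bound carries a factor $e^{\theta T/2}$ with $\theta$ dominating $L_p^2$, and (ii) forces $|k_0|$ and $\int_0^T|c|^2\,ds$ to be small relative to $L_pTe^{\theta T/2}$, which makes the iteration converge and produces an explicit bound. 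The decisive observation is that $\partial_t-\alpha\Delta$ and the Gaussian kernel $\Gamma_\varepsilon$ are translation invariant and that $\|k\|_{L^2(B_2(x_0)\times(0,T))}\le\sqrt T\,\|k\|_{L^\infty(0,T;L^2(\R))}$, $\|k_0\|_{L^\infty(B_2(x_0))}\le\|k_0\|_{L^\infty(\R)}$, and the $L^2$-norm of $F$ over $B_2(x_0)\times(0,T)$ are all finite and independent of $x_0$; hence $C$ can be taken independent of $x_0$.

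Letting $x_0$ range over $\R$ (equivalently, exhausting $\R$ by balls $B_R(0)$) then yields that $k$ is bounded on $\R\times[0,T]$ and uniformly Hölder continuous on $\R\times[\tau,T]$ for every $\tau\in(0,T)$; the initial-boundary Hölder estimate, available because $k_0\in C^{\lambda}(\R)$, extends the continuity to all of $\R\times[0,T]$, with $k(\cdot,0)=k_0$ guaranteed by the embedding $W(0,T)\hookrightarrow C([0,T];L^2(\R))$. I expect the main obstacle to be exactly this passage from compact cylinders to the whole domain: the estimates must not blow up as $|x_0|\to\infty$, and this relies entirely on \emph{global} (i.e.\ $\R$-uniform) control of every quantity entering Ladyzhenskaya's bound — the $L^\infty(0,T;L^2(\R))$ estimate of Lemma \ref{a_priori_L2}, the global sup-bound on $\mathcal P(k)$, and the $\mathcal U_{ad}$-uniform smallness (ii) of $k_0$ and $c$ — together with the translation invariance of the leading operator. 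A secondary point to verify is that the nonlocal integral is genuinely of lower order: it equals the bounded convolution $\beta\,\Gamma_\varepsilon * k$ minus the zeroth-order term $\beta k$, so the recast equation indeed falls within the structural class treated in \cite{lady}.
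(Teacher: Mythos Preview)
Your strategy and the paper's are close in spirit: both recast the nonlocal term as a lower-order perturbation of the right-hand side, invoke the interior and initial H\"older regularity of \cite{lady} on bounded cylinders, and then exhaust $\R$ by compact sets. The substantive difference is in how the global $L^\infty$ bound is obtained. The paper does \emph{not} extract boundedness from \cite{lady}; it appeals directly to Theorem~3.1 of \cite{Ran}, and the specific smallness conditions \eqref{k0Uad}(i)--(ii) with the constants $4$, $16$, $\theta$, and $e^{\theta T/2}$ are precisely that theorem's hypotheses. Only after the bound $\|k\|_{L^\infty(\R\times[0,T])}\le M$ is in hand does the paper call on \cite{lady}, whose H\"older estimate for \emph{bounded} weak solutions depends only on $M$ and the coercivity constant, hence is automatically uniform over all compact subcylinders.

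Your alternative --- obtaining the $L^\infty$ bound from local De~Giorgi/Ladyzhenskaya estimates on translated cylinders $B_2(x_0)\times(0,T)$ and exploiting translation invariance to make the constants uniform in $x_0$ --- is a legitimate route, and your control of $\|F\|_{L^2(B_2(x_0)\times(0,T))}$ via Lemma~\ref{a_priori_L2}, the global bound on $\mathcal P(k)$, and hypothesis~(ii) is correct. The weak point is your account of why \eqref{k0Uad}(i)--(ii) are needed: the sentence about ``the Gronwall/De~Giorgi iteration carrying a factor $e^{\theta T/2}$ with $\theta$ dominating $L_p^2$'' is a plausibility sketch, not a derivation, and those particular constants will not fall out of a generic Ladyzhenskaya local-maximum estimate. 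If you pursue your route you should either (a) cite a result whose hypotheses you can verify exactly from \eqref{k0Uad}, as the paper does with \cite{Ran}, or (b) drop the pretence of matching the constants and simply state that \emph{some} smallness condition of this shape suffices. The continuity portion of your argument, including the use of translation invariance to globalise and the handling of $t=0$ via the H\"older assumption on $k_0$, matches the paper's and is fine.
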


\begin{proof}
The boundedness of the weak solution is a direct application of Theorem 3.1 by \citet[p.956]{Ran}. We can interpret the nonlocal diffusion term as compact perturbation of the right-hand side and use the boundedness of the productivity operator $\mathcal{P}$. By assumption, $k_0$ and $c\in\mathcal{U}_{ad}$ are chosen such that (\ref{k0Uad}) is satisfied. Hence, the theorem mentioned above yields that there exists a constant $0<M(c):=M<\infty$, such that $\|k\|_{L^\infty(\R\times [0,T])}\le M$.\\

In order to prove the continuity of the weak solution on $\R\times[0,T]$, a bit more work has to be done. We want to apply a result from \citet{lady}, where the authors prove the local H\"older continuity of any essentially bounded weak solution of a quasi-linear differential equation of parabolic type on $\Omega\times [0,T]$. Although \citet{lady} assumed a bounded and open domain $\Omega\subset\R^n$, we can adapt the statement to our case. Since the result does not require any boundary conditions on $\partial \Omega$, we can apply Theorem 1.1 in \citet[p.419]{lady} also to the spatially unbounded case and get the H\"older continuity of exponent $\lambda_1>0$ of the weak solution on all compact subsets $K\times[t_1,t_2]\Subset\R\times(0,T)$ with $K\subset \R$ compact and $0<t_1<t_2<T$. The H\"older constant $\lambda_1$  depends only on $M$ and the coercivity constant of the bilinear form (\ref{a}), $c_{coer}>0$.\\
 Since the initial value function $k_0$ is assumed to be H\"older continuous of exponent $\lambda$, we can apply the extended result of Theorem 1.1 in \citet[p.419]{lady} and get the local H\"older continuity of exponent $\lambda_2>0$ depending on $M,\ c_{coer}$ and $\lambda$ of the weak solution also in $t=0$. Remark that we can extend our left-hand sides $c$ and $\mathcal{P}$ to $[0,T+\varepsilon]$ for any $\varepsilon>0$, so we can assume the local H\"older continuity of the weak solution also in $T$ without any loss of generality. Note that we have all assumptions in \citet[p.418]{lady} fulfilled since the solution $k$ is bounded and the constants $\alpha$ and $\beta$ in (\ref{L}) are positive. Hence, we have
\[k \in \mathcal{C}^{\lambda',\lambda'/2}(K\times[0,T]) \quad \forall K\Subset \R,\]
with some positive $\lambda'$ depending only on $M,\ T,\ c_{coer}$, and $\lambda$. Every H\"older continuous function is also continuous, hence we have
\[k \in \mathcal{C}(K\times[0,T]) \quad \forall K\Subset \R.\]

Considering the exhaustion by compact sets of $\R$, meaning a sequence of compact sets $\{K_m\}_{m\in\N}$ with $K_m\subset \mathring{K}_{m+1}$ and $\bigcup_{m\in\N} K_m=\R$, we can extend the local result to the whole unbounded $\R$ due to the locality of continuity. This means, for every $x_0\in\R$, there exists a $m$ large enough such that $(x_0,t)\in \mathring{K}_m\times[0,T]$, where $\mathring{K}$ denotes the interior of a set $K$.\\
\end{proof}

\section{The Ramsey Equilibrium - Existence of an Optimal Control}

The capital accumulation equation, which we studied intensively in the previous section, is crucial for the capital and consumption distribution in the considered economy. The Ramsey equilibrium in the nonlocal setting of our case is given as the solution of the following optimal control problem:\\

\begin{equation}\label{objective_control}
\begin{split}
\min_{k,c} \quad & \int_0^T \int_{\R} - U(c(x,t))e^{-\tau t - \gamma |x|^2} \ dxdt \\
 &+ \frac{1}{2\rho_1} \|k(\cdot,T) - k_T(\cdot)\|^2_{L^2(\R)}+ \frac{1}{2\rho_2 }\|\min \{0, k\}\|^2_{L^2(0,T;L^2({\R}))},
\end{split}
\end{equation}

\begin{equation}\label{state_control}
\begin{split}
\hspace*{-2cm}\mbox{ s.t.}\quad  \frac{\partial k}{\partial t} - \mathcal{L}(k) + \delta k - \mathcal{P}(k) &=  -c \qquad \text{ on } \R\times(0,T),\\
k(x,0)&=k_0(x) \quad \text{ on } {\R},\\
\ c \  &\in  \ \mathcal{U}_{ad}.\\
\end{split}
\end{equation}

The optimal control and the corresponding state variable define the market equilibrium according to the second welfare theorem of economics. \\

The set $\mathcal{U}_{ad}$ denotes the set of feasible controls. The first assumption, which we need to make in order to prove the existence of an optimal control in the nonlocal spatial Ramsey model, states:

\begin{itemize}
\item[(1)] {\em  $\mathcal{U}_{ad}$ is a bounded, closed, and convex subset of  $L^2(0,T;L^2(\R))$.}
\end{itemize}
For example, for a given maximal consumption function $c_{max}\in L^2(\R)\cap L^\infty(\R)$ and a maximum aggregated consumption level $\overline{C}\in\R_+$, we consider 
\[\mathcal{U}_{ad}:=\{c\in L^2(0,T;L^2(\R)):\ 0\le c \le c_{max} \ \wedge \ \|c\|_{L^2(0,T;L^2(\R))}\le \overline{C} \}.\]

The assumptions concerning the Lipschitz continuity and the uniform boundedness of the production function $p$ and the essential boundedness of the initial productivity distribution $A_0$ remain valid.

\begin{itemize}
\item[(2)] {\em The production function $p$ and the initial productivity distribution $A_0$ satisfy Assumption \ref{AssP}. As in (\ref{L}), the diffusion operator is defined as
\[\mathcal{L}(k)(x,t):= \alpha \Delta k(x,t) + \beta \int_{\R} (k(y,t)-k(x,t))\Gamma_\varepsilon(x,y)dy ,\]
for some weights $\alpha,\beta>0$ and $\varepsilon>0$ and the productivity-production operator is given as in (\ref{gamma}) by
\[\mathcal{P}(k)(x,t) := A_0(x) \exp\left( \frac{\int_{\R} \phi(k(y,t))\Gamma_\mu(x,y)dy}{\int_{\R} \phi(k(y,t))\Gamma_\varepsilon(x,y)dy}\ t \right)\ p(k(x,t)).\]}
\end{itemize}

Here, the necessary properties of $\phi$ motivate the next assumption:

\begin{itemize}
\item[(3)] {\em The nominal function $\phi$ is (Lipschitz) continuous. We assume that $\phi(k)>0$ for all $k$. }
\end{itemize}
In the following we assume that $\phi$ denotes a continuous approximation of the absolute value function. The example that we have in mind is
\[\phi(k) = \sqrt{k^2 + \eta},\]
which depends on a parameter $\eta$. We assume this parameter to be a priori defined and fixed, so it is convenient to omit the dependence of $\phi=\phi(\eta)$ on this parameter. However, for the proof of the existence of an optimal control, any continuous and positive function $\phi$ is sufficient.\\

Considering the objective function, we assume the following.

\begin{itemize}
\item[(4)] {\em The utility function $U:\R\to\R$ is bounded and locally Lipschitz continuous, hence there exists a constant $K$, such that 
\[|U(0)|\le K,\]
and a constant $L(M)$, such that for all $c_1,c_2\in[-M,M]$
\[|U(c_1)-U(c_2)|\le L(M)|c_1-c_2|.\]
Moreover, we assume that $U$ is concave.}
\end{itemize}

It is worth to mention once more, that $U$ is the utility function which describes the consuming sector in the Ramsey economy. Hence, the concavity is essential for the economic interpretation. The assumptions on $U$, together with the measurability of the function $(x,t)\mapsto e^{-\tau t-\gamma\|x\|_2^2}$ for $\tau,\gamma>0$, are necessary in order to guarantee that the objective function $\mathcal{J}$ is convex, continuous, and bounded from below on $\mathcal{U}_{ad}$.\\

\begin{lemma}\label{regKappa}
The nonlocal diffusion operator
\[\mathcal{NL}(\cdot)(x,t):\ k \mapsto \int_\R (k(y,t)-k(x,t))\Gamma_\varepsilon(x,y) dy\] is continuous from $L^2(0,T;L^2(\R))$ to $L^2(0,T; L^2(\R))$.
\end{lemma}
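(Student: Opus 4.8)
The plan is to establish continuity of $\mathcal{NL}$ by splitting it into its two natural pieces and handling each with Young's convolution inequality (or a direct Cauchy--Schwarz estimate against the Gaussian kernel, exactly as in the proof of Lemma~\ref{PropyA}(i)). Write, for fixed $t$,
\[
\mathcal{NL}(k)(x,t) = \int_\R k(y,t)\,\Gamma_\varepsilon(x,y)\,dy \;-\; k(x,t)\int_\R \Gamma_\varepsilon(x,y)\,dy = (\Gamma_\varepsilon * k(\cdot,t))(x) - k(x,t),
\]
since $\Gamma_\varepsilon$ is a probability density and, with the diagonal covariance assumption, $\Gamma_\varepsilon(x,y)$ depends only on $x-y$, so the first term is a genuine convolution. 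The operator is linear, so continuity reduces to boundedness.

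First I would treat the convolution term: by Young's inequality, $\|\Gamma_\varepsilon * g\|_{L^2(\R)} \le \|\Gamma_\varepsilon\|_{L^1(\R)}\,\|g\|_{L^2(\R)} = \|g\|_{L^2(\R)}$ for any $g\in L^2(\R)$, using $\|\Gamma_\varepsilon\|_{L^1(\R)}=1$. Applying this with $g=k(\cdot,t)$ and combining with the trivial bound on the second term gives the pointwise-in-$t$ estimate $\|\mathcal{NL}(k)(\cdot,t)\|_{L^2(\R)} \le 2\,\|k(\cdot,t)\|_{L^2(\R)}$. Then I would integrate the square of this inequality over $t\in(0,T)$ to obtain
\[
\|\mathcal{NL}(k)\|_{L^2(0,T;L^2(\R))} \le 2\,\|k\|_{L^2(0,T;L^2(\R))},
\]
which is exactly the desired bounded-linear-operator statement; since $\mathcal{NL}$ is linear, boundedness is continuity, and applying the estimate to $k_1-k_2$ gives the Lipschitz (hence continuous) dependence claimed. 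One should also note measurability of $(x,t)\mapsto\mathcal{NL}(k)(x,t)$ and that $\mathcal{NL}(k)(\cdot,t)\in L^2(\R)$ for a.e.\ $t$, which follows from Fubini/Tonelli applied to the nonnegative kernel together with the finite bound just derived.

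The only mildly delicate point — and the one I would present carefully rather than the routine Young estimate — is justifying that the double integral defining $\mathcal{NL}(k)(x,t)$ is well defined for a.e.\ $x$ and measurable jointly in $(x,t)$, so that the norm manipulations are legitimate; this is handled by Tonelli on $|k(y,t)|\,\Gamma_\varepsilon(x,y)$ (nonnegative, measurable) and the a priori $L^1_{\mathrm{loc}}$ finiteness that the Young bound provides. There is no real obstacle here: unlike the nonlinear operator $\mathcal{P}$, the map $\mathcal{NL}$ is linear with an $L^1$ convolution kernel of unit mass, so the whole argument is a one-line application of Young's inequality once the splitting into convolution plus multiplication-by-$1$ is made explicit. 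If one prefers to avoid invoking Young's inequality directly, the same bound follows by the Cauchy--Schwarz argument used in Lemma~\ref{PropyA}(i), writing $\int\!\!\int |k(y,t)|\Gamma_\varepsilon(x,y)\,dy\,v(x)\,dx$ and using $\Gamma_\varepsilon^{1/2}\in L^1$ together with $\|\Gamma_\varepsilon\|_{L^1}=1$.
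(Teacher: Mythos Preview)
Your argument is correct and follows essentially the same route as the paper: split $\mathcal{NL}(k)$ into the convolution part and the multiplication-by-one part, apply Young's inequality with $\|\Gamma_\varepsilon\|_{L^1}=1$ to the former, use $\int_\R \Gamma_\varepsilon(x,y)\,dy=1$ for the latter, and then integrate in $t$. The paper does exactly this (omitting the measurability remarks and the explicit constant $2$), so there is nothing to add.
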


\begin{proof}
We proceed as in Lemma \ref{regularityP}. Since $L^2(0,T;L^2(\R))$ is a vector space, we can estimate the two terms separately. 
To estimate the first term, we apply Young's inequality for convolution and recall that the kernel function $\Gamma_\varepsilon$ indeed has the form $\Gamma_\varepsilon(x,y) = \Gamma_\varepsilon(x-y)$. Since \[\int_\R  \Gamma_\varepsilon(x)dx=1,\] we can deduce
\begin{equation*}
\begin{split}
\left\|\int_\R k(y,t)\Gamma_\varepsilon(x-y)dy\right\|^2_{L^2(0,T;L^2(\R))} 
&\ =\int_0^T \|k(\cdot,t)*\Gamma_\varepsilon(\cdot-y)\|_{L^2(\R)} dt\\
&\ \le \int_0^T \|k(\cdot,t)\|_{L^2(\R)}\|\Gamma_\varepsilon\|_{L^1(\R)} dt\\
&\ =\|k\|^2_{L^2(0,T;L^2(\R))},
\end{split}
\end{equation*}

The estimation of the second term yields
\begin{equation*}
\begin{split}
\left\|\int_\R k(x,t)\Gamma_\varepsilon(x,y)dy\right\|^2_{L^2(0,T;L^2(\R))}  
& =\int_0^T \int_\R \left( \int_\R k(x,t)\Gamma_\varepsilon(x,y)dy\right)^2 dx dt\\
&= \int_0^T \int_\R k(x,t)^2 \left(\int_\R \Gamma_\varepsilon(x,y)dy\right)^2 dx dt\\
& \le \|k\|_{L^2(0,T;L^2(\R))}^2,
\end{split}
\end{equation*}
again exploiting $\int_\R \Gamma_\varepsilon(x,y)dy = 1$ for all $x \in\R$ by definition.

\end{proof}

\begin{theorem}\label{ExistenceOCP}
Let the assumptions $(1)-(4)$ hold. Moreover, assume that $k_0 \in L^2(\R)\cap\mathcal{C}^{\lambda,\lambda/2}(\R)$ for some $\lambda>0$ and $\mathcal{U}_{ad}$ fulfill the assumptions of Lemma \ref{kCont}, hence are given such that the weak solution of the capital accumulation equation is bounded and continuous. Then, there exist an optimal control $\overline{c}\in \mathcal{U}_{ad}$ and a corresponding optimal state $\overline{k}\in W(0,T)$ of the spatial nonlocal Ramsey model (\ref{objective_control}) and (\ref{state_control}).
\end{theorem}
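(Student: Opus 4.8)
The plan is to use the direct method of the calculus of variations. First I would observe that the objective functional $\mathcal{J}(c)$ in (\ref{objective_control}) is bounded from below on $\mathcal{U}_{ad}$: the utility term is bounded since $U$ is bounded by assumption (4) and the weight $e^{-\tau t-\gamma|x|^2}$ is integrable over $\R\times(0,T)$, while the two penalty terms are nonnegative. Hence the infimum $J^* := \inf_{c\in\mathcal{U}_{ad}}\mathcal{J}(c)$ is finite, and I may pick a minimizing sequence $(c_n)_{n\in\N}\subset\mathcal{U}_{ad}$ with $\mathcal{J}(c_n)\to J^*$. Since $\mathcal{U}_{ad}$ is bounded in the Hilbert space $L^2(0,T;L^2(\R))$ by assumption (1), after passing to a subsequence (not relabeled) we get $c_n \rightharpoonup \overline{c}$ weakly in $L^2(0,T;L^2(\R))$; because $\mathcal{U}_{ad}$ is convex and closed, it is weakly closed by Mazur's theorem, so $\overline{c}\in\mathcal{U}_{ad}$.

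Next I would pass to the limit in the state equation. Let $k_n := k(c_n)\in W(0,T)$ denote the unique weak solution of (\ref{state_control}) associated with $c_n$, which exists by Theorem \ref{Existence_Ramsey_unbounded}. By the a priori estimate of Lemma \ref{Ta_priori_W0T}, the sequence $(k_n)$ is bounded in $W(0,T)$ uniformly in $n$, since $\|c_n\|_{L^2(0,T;L^2(\R))}$ is uniformly bounded. Therefore, up to a further subsequence, $k_n\rightharpoonup\overline{k}$ weakly in $W(0,T)$, hence weakly in $L^2(0,T;H^1(\R))$ with $\partial_t k_n\rightharpoonup \partial_t\overline{k}$ weakly in $L^2(0,T;H^{-1}(\R))$. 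By the Aubin--Lions lemma the embedding $W(0,T)\hookrightarrow L^2(0,T;L^2_{loc}(\R))$ is compact (on bounded spatial domains; on $\R$ one exhausts by compact sets), so $k_n\to\overline{k}$ strongly in $L^2(0,T;L^2(K))$ for every compact $K\subset\R$, and along a further subsequence $k_n\to\overline{k}$ pointwise a.e. on $\R\times(0,T)$. The linear terms $\mathcal{L}(k_n)$ and $\delta k_n$ pass to the limit by weak convergence (the nonlocal part is a bounded linear operator by Lemma \ref{regKappa}). For the nonlinear term $\mathcal{P}(k_n)$, I would use the Lipschitz continuity estimate (\ref{Plip}) of Lemma \ref{regularityP}: $\|\mathcal{P}(k_n)(\cdot,t)-\mathcal{P}(\overline{k})(\cdot,t)\|_{L^2(\R)}\le c_2\|k_n(\cdot,t)-\overline{k}(\cdot,t)\|_{L^2(\R)}$. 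The subtlety is that strong $L^2$-convergence of $k_n$ was only obtained locally in space; to upgrade to global strong convergence one argues that the tails are uniformly small — here the boundedness and continuity hypotheses inherited from Lemma \ref{kCont}, together with the Gaussian decay built into $\Gamma_\mu,\Gamma_\varepsilon$ and the structure of $\mathcal{P}$, control the contribution of $|x|$ large, yielding $\mathcal{P}(k_n)\to\mathcal{P}(\overline{k})$ strongly in $L^2(0,T;L^2(\R))$. Thus $\overline{k}$ is the weak solution of (\ref{state_control}) with control $\overline{c}$, i.e. $\overline{k}=k(\overline{c})\in W(0,T)$.

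Finally I would invoke weak lower semicontinuity of the objective. The map $c\mapsto k(c)$ just shown to be weak-to-weak sequentially continuous, the functional $\mathcal{J}$ splits into three pieces. The terminal penalty $\tfrac{1}{2\rho_1}\|k(\cdot,T)-k_T\|^2_{L^2(\R)}$ is continuous along $k_n\to\overline{k}$ using the trace/embedding $W(0,T)\hookrightarrow\mathcal{C}(0,T;L^2(\R))$ — or, more robustly, lower semicontinuous by convexity and the weak convergence of the traces. The state-constraint penalty $\tfrac{1}{2\rho_2}\|\min\{0,k\}\|^2_{L^2(0,T;L^2(\R))}$ is convex and continuous in $k$, hence weakly lower semicontinuous, and passes the limit using $k_n\to\overline{k}$ a.e. together with Fatou. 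The utility term $-\int_0^T\int_\R U(c)e^{-\tau t-\gamma|x|^2}\,dx\,dt$ is convex in $c$ (since $U$ is concave) and strongly continuous, hence weakly lower semicontinuous on $L^2(0,T;L^2(\R))$. Combining, $\mathcal{J}(\overline{c})\le\liminf_{n\to\infty}\mathcal{J}(c_n)=J^*$, and since $\overline{c}\in\mathcal{U}_{ad}$ we conclude $\mathcal{J}(\overline{c})=J^*$, so $\overline{c}$ is an optimal control with optimal state $\overline{k}$.

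\textbf{Main obstacle.} The delicate point is the strong convergence needed to pass to the limit in the nonlinear operator $\mathcal{P}(k_n)$ on the \emph{unbounded} domain $\R$: Aubin--Lions gives compactness only locally in space, so one must separately control the behaviour at spatial infinity. This is exactly where the hypotheses of Lemma \ref{kCont} (uniform boundedness and continuity of the states, forced by the smallness conditions (\ref{k0Uad})) and the Gaussian kernels enter; without them the tail estimate fails and weak-to-weak continuity of $c\mapsto k(c)$ cannot be concluded.
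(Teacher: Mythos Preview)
Your overall strategy---the direct method with a minimizing sequence, weak compactness of controls in $\mathcal{U}_{ad}$, a priori bounds on the states from Lemma~\ref{Ta_priori_W0T}, passage to the limit in the weak formulation, and weak lower semicontinuity of $\mathcal{J}$---matches the paper's proof. The genuine difference is in the compactness mechanism used to identify the limit of $\mathcal{P}(k_n)$. The paper does \emph{not} invoke Aubin--Lions; instead it exploits the H\"older regularity furnished by Lemma~\ref{kCont} (via Lady\v{z}enskaja--Solonnikov--Ural'ceva) to get $k_m$ bounded in $\mathcal{C}^{\lambda',\lambda'/2}(K\times[0,T])$ and hence, by the compact embedding into $\mathcal{C}(K\times[0,T])$, uniform convergence $k_m\to\overline{k}$ on compact cylinders. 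It then passes to the limit only against test functions $\psi=\varphi\otimes v$ with $v\in\mathcal{C}_0^\infty(\R)$, so that strong global $L^2$-convergence of $\mathcal{P}(k_m)$ is never needed: pointwise convergence of $\int_\R\phi(k_m(y,t))\Gamma_\nu(x,y)\,dy$ (local uniform convergence inside, Gaussian tails plus the uniform $L^\infty$-bound outside) suffices. Your Aubin--Lions route is also viable, but note that the Lipschitz estimate (\ref{Plip}) alone does not localize, since $P(k)(x,t)$ depends on $k(\cdot,t)$ over all of $\R$; your tail argument for $\mathcal{P}(k_n)\to\mathcal{P}(\overline{k})$ in $L^2(0,T;L^2(\R))$ really rests on $A_0\in L^2(\R)$ (Assumption~\ref{AssP}(b)), which gives $\|\mathcal{P}(k_n)-\mathcal{P}(\overline{k})\|_{L^2(\R\setminus B_R)}\le 2M_p\,e^{T\varepsilon/\mu}\|A_0\|_{L^2(\R\setminus B_R)}$, while inside $B_R$ you combine local strong convergence of $k_n$ with the $L^\infty$-bound from Lemma~\ref{kCont} to control the nonlocal integrals in $P$. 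The paper's approach explains why the H\"older hypothesis on $k_0$ is actually used; your approach needs only the $L^\infty$-bound from Lemma~\ref{kCont} and is in that sense slightly more economical, at the cost of a more delicate dominated-convergence argument for the $P$-part.
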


\begin{proof}
As already shown in Theorem \ref{Existence_Ramsey_unbounded}, the state equation has a unique weak solution $k:=k(c) \in W(0,T)$ for every control $c\in\mathcal{U}_{ad}$ and $k_0\in L^2(\R)$. The additional assumptions on the initial value function $k_0$ then yield, according to Lemma \ref{kCont}, the uniform boundedness of this weak solution, i.e. the existence of a constant $0<M:=M(c)<\infty$ such that
\[\|k\|_{L^\infty(\R\times [0,T])}\le M,\]
for all states corresponding to a control $c\in\mathcal{U}_{ad}$ and the continuity of $k$, i.e. \\$k\in C(\R\times[0,T])$.\\

Due to the boundedness of $\mathcal{U}_{ad}$, the uniform boundedness of $k$ in $W(0,T)$ according to Lemma \ref{Ta_priori_W0T}, and the assumption on the objective, there exists a finite infimum $J_{inf}$ of $\mathcal{J}$. Since $L^2(0,T;L^2({\R}))$ is reflexive, we can choose a minimizing sequence $(c_m)_{m\in\N}$ in $\mathcal{U}_{ad}$ that has a weak convergent subsequence $(c_{m_j})_{j\in\N}$ with limit $\overline{c}\in L^2(0,T;L^2(\R))$. Without any loss of generality, we can identify this subsequence with $(c_m)_{m\in\N}$. Assumption $(1)$ states that $\mathcal{U}_{ad}$ is closed and convex, thus weakly sequentially closed, which guaranties that $\overline{c}\in\mathcal{U}_{ad}$. Hence,  we get
\[c_m \rightharpoonup \overline{c}\in \mathcal{U}_{ad},\ m\to \infty.\]
This sequence of controls defines a sequence of corresponding states \\$(k_m)_{m\in\N}:=(k(c_m))_{m\in\N}$. We define
\[\rho_m := \mathcal{P}(k_m)\quad  \mbox{  and  } \quad \kappa_m := \int_{\R} (k_m(y,t) - k_m(x,t))\Gamma_\varepsilon(x,y) dy.\]
As already shown in the Lemmas \ref{regularityP} and \ref{regKappa}, $\rho_m$ and $\kappa_m$ are elements of \\$L^2(0,T;L^2(\R ))$ for $k_m\in L^2(0,T;L^2(\R))$. Moreover, the continuity of $\mathcal{P}$ and $\mathcal{NL}$ in $L^2(0,T;L^2(\R))$ yields the uniform boundedness of the sequences due to the uniform boundedness of $(k_m)_{m\in\N}$ in $W(0,T)$. Here, we have applied Lemma \ref{Ta_priori_W0T} and recalled that $\mathcal{U}_{ad}$ is assumed to be bounded in $L^2(0,T;L^2(\R))$. Hence, we can assume that there exist some subsequences, again denoted by $(\rho_m)_{m\in\N}$ and $(\kappa_m)_{m\in\N}$ without any loss of generality, that converge weakly to some $\overline{\rho}$ and $\overline{\kappa}$ in $L^2(0,T;L^2(\R))$.\\

Now, we consider the linear parabolic initial value problems given by
\begin{equation*}
\begin{split}
\frac{\partial k_m}{\partial t} - \alpha\Delta k_m + \delta k_m & = p_m+ \kappa_m - c_m \quad \mbox{ on } \R \times(0,T)\\
k_m(\cdot,0) & = k_0 \hspace*{1.96cm} \mbox{  on } \R 
\end{split}
\end{equation*}
for $m\in\N$. We know that the right-hand side converges weakly towards $\overline{p}+\overline{\kappa} - \overline{c}$ in $L^2(0,T;L^2(\R))$, hence also in $L^2(0,T;H^{-1}(\R))$ since the embedding $L^2(0,T;L^2(\R)) \hookrightarrow L^2(0,T;H^{-1}(\R)) $ is continuous. Due to the continuity of the solution mapping, that maps a right-hand side and an initial value function to the solution of a linear parabolic differential equation (c. \citealp[p.382]{wloka}), this mapping is also weakly continuous from $L^2(0,T; H^{-1}(\R))\times L^2(\R)$ to $W(0,T)$. Thus we get the weak convergence of the left-hand side as well. We have
\[k_m\rightharpoonup \overline{k} \mbox{  in  }W(0,T),\ m\to\infty. \]
Moreover, the continuity of the solution mapping guarantees that $\overline{k}\in W(0,T)$.\\
With the same arguments as used in the proof of Lemma \ref{kCont}, we get
\[k_m\rightharpoonup \overline{k} \mbox{  in  } C^{\lambda',\lambda'/2}(K\times[0,T]),\ m\to\infty,\]
for all compact subsets $K$ of $\R $ and an $\lambda'>0$, depending on $M,\ T,\ coer$, and $\lambda$. \\
It is true that
\[C^{\lambda',\lambda'/2}( K\times [0,T]) \hookrightarrow^c C(K\times [0,T]),  \]
(cf. \citealp[p.12]{adams}), thus we get the strong convergence of the sequence of states in the space of continuous functions on all compact subsets $K\times[0,T]$ of $\R\times [0,T]$.\\

Now, we need to show the convergence of the integrals in the weak formulation. So far, we have derived
\begin{equation}\label{cmkm}
\begin{split}
(i)\ \ \ & c_m \rightharpoonup \overline{c} \mbox{  in  } L^2(0,T;L^2(\R )), \\
(ii)\ \ & k_m\rightharpoonup \overline{k} \mbox{  in  } W(0,T),\\
(iii)\ & k_m\to\overline{k} \mbox{  in  } C(K\times[0,T]) \mbox{  for all  } K\Subset \R .
\end{split}
\end{equation}

Due to the a priori estimates in (\ref{apriori_semilinear}) and (\ref{a_priori_W0T}) and the weak (or weak star) compactness of unit balls in the spaces $L^2(0,T;H^1(\R))$, $L^\infty(0,T;L^2(\R))$, and $L^2(0,T;H^{-1}(\R))$, we can adapt the arguments by \citet[p.515]{dautray} and extract a subsequence $(k_{\overline{m}})_{\overline{m}\in\N}$ with
\begin{equation}\label{km_weakL2}
\begin{split}
(i)\ \ & k_{\overline{m}} \rightharpoonup \overline{k} \mbox{ in } L^2(0,T;H^1(\R)),\\
(ii)\ & k_{\overline{m}}\rightharpoonup^* \overline{k} \mbox{ in } L^\infty(0,T;L^2(\R)).
\end{split}
\end{equation}
Note that the embedding $W(0,T)\hookrightarrow L^2(0,T;H^1(\R))$ is continuous, hence weakly continuous, which guarantees that the subsequence $(k_{\overline{m}})_{\overline{m}\in\N}$ converges to the same limit in $L^2(0,T;H^1(\R))$ as $(k_m)_{m\in\N}$ in $W(0,T)$.\\

For now, we choose the test functions $\psi:=\varphi \otimes v$ for $\varphi\in \mathcal{C}_0^\infty([0,T[)$ with $\varphi(0)\neq 0$ and $v\in \mathcal{C}^\infty_0(\R)$. We derive the weak formulation of the capital accumulation equation as
\begin{equation*}
\begin{split}
&-\int_0^T \langle k_{\overline{m}}(t),\psi_t(t)\rangle_{L^2(\R)} dt + \int_0^T {\bf a} (k_{\overline{m}}(t),\psi(t))\ dt -\int_0^T \langle \mathcal{P}(k_{\overline{m}})(t),\psi(t)\rangle_{L^2(\R)}dt\\
&=-\int_0^T\langle c_{\overline{m}}(t),\psi(t)\rangle_{L^2(\R)}dt + \langle k_0,\psi(0)\rangle_{L^2(\R)}, \\
&\psi=\varphi\otimes v,\ \forall \varphi \in \mathcal{C}^\infty_0([0,T[),\ \varphi(0)\neq 0,\ v\in\mathcal{C}^\infty_0(\R),
\end{split}
\end{equation*}
with the definition of the bilinear form in \ref{a}. \\
From (\ref{cmkm})$(i)$, we can deduce
\[\int_0^T \langle c_{\overline{m}}(t),\psi(t)\rangle_{L^2(\R)} dt \to \int_0^T \langle \overline{c}(t),v\rangle_{L^2(\R)} \varphi(t)\ dt,\ m\to\infty\]
and from (\ref{km_weakL2})$(i)$, we get

\[\int_0^T \langle k_{\overline{m}}(t),\psi'_{\overline{m}}(t) \rangle_{L^2(\R)} dt \to \int_0^T \langle \overline{k}(t),\psi'(t) \rangle_{L^2(\R)} dt,\ m\to\infty.\]
We can rewrite the bilinear form ${\bf a}(k,\psi)$ in verctorial form as $\langle A k,\psi\rangle_{H^1(\R)}$ with $Ak(\cdot)\in L^2(0,T;H^{-1}(\R))$ (cf. \citealp[p.515]{dautray}), hence (\ref{km_weakL2})$(i)$ implies
\[\int_0^T {\bf a} (k_{\overline{m}}(t),\psi(t))dt \to \int_0^T {\bf a} (\overline{k}(t),\psi(t))dt\quad \mbox{ for  }{\overline{m}}\to\infty.\]

So far, we were able to use the same arguments as \citet[p.515]{dautray}. The convergence of the nonlinear productivity term needs some further analysis. We exploit the strong convergence of the sequence of states on compact sets, (\ref{cmkm})$(iii)$ and the properties of the kernel function $\Gamma_\varepsilon$, respectively $\Gamma_\mu$. We start with the exponential term. Due to the boundedness of every $k_{\overline{m}}$ in $L^\infty(\R\times[0,T])$, the continuity of $\phi$, and the property of $\Gamma_\varepsilon$ to be decreasing for large absolute values of input variables, we can choose a radius $R>0$ large enough such that
\[\int_{\R \backslash \mathcal{B}_R(0)} \phi(k_{\overline{m}}(y,t))\Gamma_\varepsilon(x,y)dy \le \tilde{\varepsilon}/4\]
for all ${\overline{m}}\in\N$ and an $\tilde{\varepsilon}>0$. 
Then it is also 
\[\int_{\R \backslash \mathcal{B}_R(0)} \phi(k_{\overline{m}}(y,t))\Gamma_\mu(x,y)dy \le \tilde{\varepsilon}/4\]
for $\mu<\varepsilon$. With this, we get
\begin{equation*}
\begin{split}
& \left| \int_{\R } \phi(k_{\overline{m}}(y,t))\Gamma_\varepsilon(x,y)dy - \int_{\R } \phi(\overline{k}(y,t))\Gamma_\varepsilon(x,y)dy  \right|\\
\le & \left| \int_{\mathcal{B}_R(0)} (\phi(k_{\overline{m}}(y,t))- \phi(\overline{k}(y,t)))\Gamma_\varepsilon(x,y)dy  \right| \\
& +  \left|\int_{\R \backslash \mathcal{B}_R(0)} \phi(k_{\overline{m}}(y,t))\Gamma_\varepsilon(x,y)dy \right| +  \left|\int_{\R \backslash \mathcal{B}_R(0)} \phi(\overline{k}(y,t))\Gamma_\varepsilon(x,y)dy \right|\\
\le &  \left| \int_{\mathcal{B}_R(0)} (\phi(k_{\overline{m}}(y,t))- \phi(\overline{k}(y,t)))\Gamma_\varepsilon(x,y)dy  \right|  + \tilde{\varepsilon}/2.
\end{split}
\end{equation*}
By assumption, $\phi$ is continuous, hence there exists a $N\in\N$ such that if $|k_{\overline{m}}-\overline{k}|\le \delta$ for ${\overline{m}}\ge N$, it is
$|\phi(k_{\overline{m}}(y,t)) - \phi(\overline{k}(y,t))|\le \tilde{\varepsilon}(N)$. This yields
\[ \int_{\mathcal{B}_R(0)}|(\phi(k_{\overline{m}}(y,t))- \phi(\overline{k}(y,t)))|\Gamma_\varepsilon(x,y)dy \le \tilde{\varepsilon}(N) \int_{\mathcal{B}_R(0)} \Gamma_{\varepsilon}(x,y)dy.\]
We can choose $N$ large enough such that $\tilde{\varepsilon}(N)\le \tilde{\varepsilon}/2$ and end up with
\[\left| \int_{\R } \phi(k_{\overline{m}}(y,t))\Gamma_\varepsilon(x,y)dy - \int_{\R } \phi(\overline{k}(y,t))\Gamma_\varepsilon(x,y)dy  \right| \le \tilde{\varepsilon},\]
since $\int_{\mathcal{B}_R(0)} \Gamma_{\varepsilon}(x,y)dy \le 1$.\\
According to (\ref{estimateGamma}), the exponential term is bounded by
\[\exp\left( \frac{\int_\R \phi(k_{\overline{m}}(y,t))\Gamma_\mu(x,y)dy}{\int_\R \phi(k_{\overline{m}}(y,t))\Gamma_\varepsilon(x,y)dy} t\right)\le e^{\frac{T\varepsilon}{\mu}} \]
for all $k_{\overline{m}}$. Hence, we can exploit the property of the chosen test function $v$ having compact support on $\R$. We can finally show the convergence
\[ \int_0^T \langle \mathcal{P}(k_{\overline{m}})(t),\psi(t) \rangle _{L^2(\R)}dt \to \int_0^T \langle \mathcal{P}(\overline{k})(t),\psi(t) \rangle_{L^2(\R)}dt,\ \overline{m}\to\infty,\]
since $k_{\overline{m}}\to \overline{k}$ strongly on all compact sets $K\times[0,T]$. Combining all limits for ${\overline{m}}\to\infty$ in the weak formulation,we obtain
\begin{equation*}
\begin{split}
&-\int_0^T \langle \overline{k}(t),v\rangle_{L^2(\R)}\varphi'(t) dt + \int_0^T {\bf a} (\overline{k}(t),v)\varphi(t) dt -\int_0^T \langle \mathcal{P}(\overline{k})(t),v\rangle_{L^2(\R)}\varphi(t) dt\\
&=\langle k_0,\psi(0)\rangle_{L^2(\R)}-\langle \overline{c}(t),v\rangle_{L^2(\R)}\varphi(t) dt.
\end{split}
\end{equation*}

Since this equality has to hold for every $\varphi \in \mathcal{C}^\infty_0([0,T[)$ with $\varphi(0)\neq 0$ and $v\in \mathcal{C}_0^\infty(\R)$, we have finally shown that $\overline{k}\in W(0,T)$ is indeed a weak solution of the capital accumulation equation in the nonlocal spatial Ramsey model with endogenous productivity growth.\\

The only thing left to show in order to finish this proof of existence of an optimal control is the optimality of $(\overline{c},\overline{k})$. But this follows immediately from the convexity and continuity of the objective function: Recall that every continuous and convex function is lower semicontinous. Hence, for 
\begin{equation*}
\begin{split}
J(c,k) &=  \int_0^T \int_{\R} - U(c(x,t))e^{-\tau t - \gamma x^2} \ dxdt \\
&+ \frac{1}{2\rho_1 }\| k(\cdot,T) - k_T(\cdot)\|^2_{L^2(\R)} + \frac{1}{2\rho_2 }\|\min \{0, k\}\|^2_{L^2(0,T;L^2({\R}))}\\
&:= F(c) + Q(k)
\end{split}
\end{equation*}
it follows
\begin{equation*}
\begin{split}
J_{inf}&=\lim_{\overline{m}\to\infty} J(c_{\overline{m}},k_{\overline{m}}) = \lim_{\overline{m}\to\infty} F(c_{\overline{m}})+ \lim_{\overline{m}\to\infty} Q(k_{\overline{m}})\ge F(\overline{c}) + Q(\overline{k}) = J(\overline{c},\overline{k}).
\end{split}
\end{equation*}
Since $J_{inf}$ was the infimum of $\mathcal{J}$, we get the equality.
\end{proof}

The proof of existence of an optimal control is crucial, not only for the mathematical study. From an economic point of view, this means that there exists a competitive market equilibrium in the closed spatialized Ramsey economy, where households may be heterogeneous in their initial capital distribution, productivity is heterogeneous in space and time, and no interaction with the surrounding takes place. Due to its complexness, the nonlocal spatial Ramsey model with endogenous productivity growth is quite general. \cite{brito01,brito04,brito12}, \cite{boucekkine,boucekkine13}, and \cite{camacho} admit that their spatial versions of the Ramsey model are not well-posed in the sense of Hadamard, at least if they consider a quite general, convex utility function and no further restrictions on the set of interest. As already mentioned, all approaches analyzing the (local) model with respect to existence of an optimal control are based on the theory of classical solutions. We considered a weaker notion of solution and were able to proof not only the existence of a weak solution of the capital accumulation equation but also the existence of an optimal control. Since our model is very general, we can capture the dynamics of the local model by Brito as a special case (for example by setting $\beta=0$, $\mu=\varepsilon$). Hence, we have enhanced the economic theory on the spatial Ramsey model and provided not only a proof of existence for our model, but also for the common spatial Ramsey models.

\bibliography{literature}

\end{document}